\newtheorem{theorem}{Theorem}
\title{Introducing memory to a family of multi-step multidimensional iterative methods with weight function }
\author{
{Alicia Cordero, Eva G. Villalba, Juan R. Torregrosa, Paula Triguero-Navarro}\\
{\quad}
\\
{\small
Instituto Universitario de Matem\'atica Multidisciplinar,}\\
{\small Universitat Polit\`ecnica de Val\`encia.
Val\`encia, Spain.}\\\\
{\small E-mail:
acordero@mat.upv.es, egarvil@posgrado.upv.es, jrtorre@mat.upv.es, ptrinav@doctor.upv.es,}\\}
\begin{document}

\maketitle
\begin{abstract}
In this paper, we construct a derivative-free multi-step iterative scheme based on Steffensen's method. To avoid excessively increasing the number of functional evaluations and, at the same time, to increase the order of convergence, we freeze the divided differences used from the second step and use a weight function on already evaluated operators. Therefore, we define a family of multi-step methods with convergence order $2m$, where $m$ is the number of steps, free of derivatives, with several parameters and with dynamic behaviour, in some cases, similar to Steffensen's method. In addition, we study how to increase the convergence order of the defined family by introducing memory in two different ways: using the usual divided differences and the Kurchatov divided differences. We perform some numerical experiments to see the behaviour of the proposed family and suggest different weight functions to visualize with dynamical planes in some cases the dynamical behaviour.

\end{abstract}

\section{Introduction}
One of the most studied problems in numerical analysis is the resolution of systems of nonlinear equations of the form $F(x) = 0$, where $F:D\subset\mathbb{R}^n\rightarrow \mathbb{R}^n$. The interest in this type of problems is due to the fact that it is crucial to solve other problems of greater complexity in applied mathematics. Generally, it is not possible to find the exact solution of these systems, so iterative methods are used to approximate their solutions.

Iterative methods have been an important field of research in recent years. The essence of these methods consists of finding an approximate solution, using an iterative process. More specifically, starting from an initial approximation $x^{(0)}$ close to the solution $\alpha$, a sequence of approximations $\{x^{(k)}\}_{k\geq0}$ is constructed, through the iterative method, such that, under certain conditions, $\lim\limits_{k\to\infty}x^{(k)}=\alpha$.

The most famous and widely used method for solving $F(x)=0$ is Newton's scheme (see \cite{Ostr1}), whose iteration is given by
\begin{equation}\label{New}
    x^{(k+1)}=x^{(k)}-[F'(x^{(k)})]^{-1}F(x^{(k)}),\hspace{0.2cm}k=0,1,2,\ldots
\end{equation}
where $F'(x^{(k)})$ represents the Jacobian matrix of the operator $F$ evaluated at the iteration $x^{(k)}$. The importance of this method is due to its simplicity, efficiency and the fact that, in the unidimensional case, it is an optimal method, in the sense of Kung-Traub's conjecture \cite{KT}. Moreover, another reason why this method is commonly used is that, under certain conditions, it has quadratic convergence and high accessibility, that is, the region of starting points $x^{(0)}$ for which the method converges is large. However, due to the presence of the Jacobian matrix, Newton's scheme is only applicable to differentiable operators.

For this reason, other methods that avoid the computation of the Jacobian have been developed in recent years. One of the most important derivative-free schemes is Steffensen's method \cite{St}, a method that approximates the Jacobian matrix of Newton's scheme by a first-order divided-difference operator of the form $[x,x+F(x);F]$. Other similar derivative-free schemes obtained from it are, for example, \cite{KT,Alarcon,Argyros,Ezquerro,BK,AA,He}. These methods are applicable to non-differentiable operators and, in turn, preserve the simplicity, efficiency and quadratic convergence of Newton's scheme.

Many tries have been made to improve this method from different points of view. At times, tries have been made to improve its dynamics, efficiency and convergence speed, but improving it on the one hand, in most cases, means making the iterative expression more complicated. For example, in \cite{Amat} we see that increasing its convergence speed implies performing new functional evaluations, i.e., increasing its computational cost and therefore making its efficiency worse.

In response to this type of problem, many articles focus on the construction of derivative-free multi-step iterative schemes, which manage to increase the order of convergence of methods such as Steffensen's, without excessively increasing the number of functional evaluations in each iteration. An example of this is seen in \cite{Narang}, in which a multi-step iterative process is presented based on the composition of Steffensen's method with itself $m$ times, but using in each of the $m$ steps the same divided difference operator as in the first step. With this idea it is possible to achieve a convergence order of $m+1$.

In this paper, we are inspired precisely by this idea, which is to increase the speed of convergence of Steffensen's method while trying not to deteriorate its other characteristics, such as accessibility and efficiency. For this purpose, we have constructed a parametric family of multi-step iterative methods without derivatives, making use of a weight function $H(t^{(k)})$ (see \cite{Chi,Chun}), which allow us to explore the different advantages of each of the methods that are part of this family depending on the form of this function. The family is described by the following iterative scheme:
\begin{equation}\label{Method}
SW_m: \left\{
\begin{array}{lcl}
z_1^{(k)}&=&x^{(k)}-\left[w^{(k)},x^{(k)};F\right]^{-1}F\left(x^{(k)}\right),\\
z_2^{(k)}&=&z_1^{(k)}-H\left(t^{(k)}\right)\left[w^{(k)},x^{(k)};F\right]^{-1}F\left(z_1^{(k)}\right),\\
&\vdots&\\
z_{m-1}^{(k)}&=&z_{m-2}^{(k)}-H\left(t^{(k)}\right)\left[w^{(k)},x^{(k)};F\right]^{-1}F\left(z_{m-2}^{(k)}\right),\\
x^{(k+1)}&=&z_{m-1}^{(k)}-H\left(t^{(k)}\right)\left[w^{(k)},x^{(k)};F\right]^{-1}F\left(z_{m-1}^{(k)}\right),
\end{array}
\right.
\end{equation}
where $t^{(k)}=\left[w^{(k)},x^{(k)};F\right]^{-1}\left[z_1^{(k)},v^{(k)};F\right]$, where $w^{(k)}=x^{(k)}+\beta F\left(x^{(k)}\right)$ and $v^{(k)}=z_1^{(k)}+\delta F\left(z_1^{(k)}\right)$. Note that $k$ refers to the number of iterations and $m$ to the number of steps.\\\\ Throughout this article we see the convergence speed of this family, study its efficiency and compare its accessibility with Steffensen's to make sure that the increase in speed has not affect its dynamics. Furthermore, in order to farther increase its convergence speed, we study two ways to introduce memory to our family, as \cite{ChiCo}.

With these goals in mind, we have structured the paper as follows: First, in Section 2 we analyse the convergence of the parametric family and obtain its error equation. On the other hand, in Section 3 we introduce memory to the family in two different ways and study the order of convergence of the two resulting families. In Section 4, we study the efficiency of the family depending on the size of the system to be solved and the number of steps $m$. Finally, in Section 5 several numerical experiments are carried out to study the behaviour of the different methods from several points of view, one of them the dynamical planes.

\section{Convergence analysis}
In this section, we study the local order of convergence for the family given by (\ref{Method}). For this purpose, we assume that the nonlinear operators $H$ and $F$ are differentiable, so that we can obtain the Taylor development in an environment of the identity matrix $I$ for $H$ and of the solution $\alpha$ for $F$. With this, we arrive to an explicit expression of the convergence error equation of the family and, from this expression, we deduce the order of convergence.

To this end, let us introduce the necessary notation for operator $H$ and the characterization of the divided difference operators introduced in \cite{OR}.

First, we consider $X=\mathbb{R}^{n\times n}$ the Banach space of real square matrices of size $n \times n$ and let be the differentiable function $H:X\rightarrow X$ such that its Fréchet derivatives are well defined and satisfy:
\begin{itemize}
    \item $H'(u)(v)=H_1uv$, where $H':X\rightarrow \mathcal{L}(X)$ and $H_1\in \mathbb{R}$,
    \item $H''(u,v)(w)=H_2uvw$, where $H'':X\times X\rightarrow \mathcal{L}(X)$ and $H_2\in \mathbb{R}$,
\end{itemize}
where the set of linear operators defined in X is denoted by $\mathcal{L}(X)$. Then, when the number of iterations, $k$, tends to infinity, the variable $t^{(k)}$ tends to the identity matrix $I$ and, therefore, there are real $H_1$ , $H_2$ such that $H$ can be expanded around $I$ as:
$$H\left(t^{(k)}\right)=H(I)+H_1\left(t^{(k)}-I\right)+\dfrac{1}{2}H_2\left(t^{(k)}-I\right)^2+O\left(\left(t^{(k)}-I\right)^3\right).$$
On the other hand, for the characterization of the divided difference operators, we consider that $F: \mathbb{R}^n \rightarrow \mathbb{R}^n$ is a differentiable function in an open neighborhood $D \subset \mathbb { R }^n$, which contains the solution $\alpha$. Then, we consider the divided difference operator
$$[x+h,x;F]=\int_0^1 F'(x+th)dt,$$
defined by Genochi-Hermite in \cite{OR}.

Using the Taylor expansion of $F'(x+th)$ around point $x$ and integrating, we obtain the following development
$$[x+h,x;F]=F'(x)+\dfrac{1}{2}F''(x)h+\dfrac{1}{6}F'''(x)h^2+O(h^3).$$
Then, we establish the following result.

\begin{theorem}\label{Teorema1}
Let $F: \mathbb{R} ^n \longrightarrow \mathbb{R}^n$ be a sufficiently differentiable function in an neighbourhood of $\alpha$, which we denote by $D \subset \mathbb{R}^n$, such that $F(\alpha)=0$. We assume that $F'(\alpha)$ is non singular. Let $H(t)$ be a real matrix function that satisfies  $H(0)=1$, $H_1=-1$ and $\|H''(0)\|<\infty$.
Then, taking an estimate $x^{(0)}$ close enough to $\alpha$, the sequence of iterates $\{x^{(k)}\}_{k\geq 0}$ generated by method (\ref{Method}) with $m$-steps converges to $\alpha$ with order $2m$ for all $\beta\neq0$ and $\delta\neq0$.
\end{theorem}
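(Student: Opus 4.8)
The plan is to carry out a standard local convergence analysis based on Taylor expansions around the root $\alpha$, tracking the error $e^{(k)} = x^{(k)} - \alpha$ through each of the $m$ steps of the scheme \eqref{Method}. I would introduce the notational shorthand $C_j = \frac{1}{j!}[F'(\alpha)]^{-1}F^{(j)}(\alpha)$ for $j \geq 2$, so that $F(x^{(k)}) = F'(\alpha)\bigl(e^{(k)} + C_2 (e^{(k)})^2 + C_3 (e^{(k)})^3 + \cdots\bigr)$. The first objective is to expand $w^{(k)} = x^{(k)} + \beta F(x^{(k)})$, obtaining $w^{(k)} - \alpha = (I + \beta F'(\alpha))e^{(k)} + O((e^{(k)})^2)$, and then to use the Genocchi–Hermite formula together with the displayed expansion of $[x+h,x;F]$ to write the divided difference $[w^{(k)}, x^{(k)}; F]$ as $F'(\alpha)$ times a matrix of the form $I + O(e^{(k)})$. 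Inverting this series gives $[w^{(k)}, x^{(k)}; F]^{-1}$ as a Neumann-type expansion, and substituting into the first substep should yield $z_1^{(k)} - \alpha = A_2 (e^{(k)})^2 + O((e^{(k)})^3)$ for an explicit constant $A_2$ (depending on $\beta$ and $C_2$); so the first step already has order $2$.

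The second objective is to analyze the weight function argument $t^{(k)} = [w^{(k)}, x^{(k)}; F]^{-1}[z_1^{(k)}, v^{(k)}; F]$. Since $v^{(k)} = z_1^{(k)} + \delta F(z_1^{(k)})$ and $z_1^{(k)} - \alpha = O((e^{(k)})^2)$, both $[z_1^{(k)}, v^{(k)}; F]$ and $[w^{(k)}, x^{(k)}; F]$ are $F'(\alpha) + O(e^{(k)})$, so $t^{(k)} = I + O(e^{(k)})$; more precisely I would compute $t^{(k)} - I$ to leading order and find it is $O(e^{(k)})$ (not smaller), which is the crucial fact. Then, using the hypothesis $H(0) = 1$ (read as $H(I) = I$), $H_1 = -1$, and $\|H''(0)\| < \infty$, the Taylor expansion $H(t^{(k)}) = I - (t^{(k)} - I) + \frac{1}{2}H_2 (t^{(k)} - I)^2 + O((t^{(k)}-I)^3)$ is what makes the cancellation work: in the substep $z_2^{(k)} = z_1^{(k)} - H(t^{(k)})[w^{(k)},x^{(k)};F]^{-1}F(z_1^{(k)})$, the product $H(t^{(k)})[w^{(k)},x^{(k)};F]^{-1}$ must approximate $[z_1^{(k)}, \alpha; F]^{-1}$ closely enough that the $O((e^{(k)})^2)$ term of $z_1^{(k)} - \alpha$ is cancelled and $z_2^{(k)} - \alpha$ becomes $O((e^{(k)})^4)$. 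I would verify this by expanding $H(t^{(k)})[w^{(k)},x^{(k)};F]^{-1}F(z_1^{(k)}) = (z_1^{(k)} - \alpha)\bigl(I + O(e^{(k)})\cdot(\text{something }O(e^{(k)}))\bigr)$ and checking the bracket equals $I + O((e^{(k)})^2)$, which combined with $z_1^{(k)} - \alpha = O((e^{(k)})^2)$ gives order $4$.

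The third objective is the induction on the remaining steps. The key structural observation is that from the second step onward the scheme is a fixed-point-style iteration $z_j^{(k)} = z_{j-1}^{(k)} - H(t^{(k)})[w^{(k)},x^{(k)};F]^{-1}F(z_{j-1}^{(k)})$ in which the correction operator $H(t^{(k)})[w^{(k)},x^{(k)};F]^{-1}$ is frozen, and I would prove by induction that $z_j^{(k)} - \alpha = O((e^{(k)})^{2j})$ for $j = 1, \ldots, m-1$, and finally $x^{(k+1)} - \alpha = O((e^{(k)})^{2m})$. The inductive step uses that $F(z_{j-1}^{(k)}) = F'(\alpha)(z_{j-1}^{(k)} - \alpha) + O((z_{j-1}^{(k)} - \alpha)^2)$ together with the fact, established above, that $F'(\alpha)$ times the frozen operator differs from the identity by $O((e^{(k)})^2)$; thus each step roughly doubles the order: if $z_{j-1}^{(k)} - \alpha = O((e^{(k)})^{p})$ then $z_j^{(k)} - \alpha = O((e^{(k)})^{p} \cdot (e^{(k)})^{2}) = O((e^{(k)})^{p+2})$ — wait, that would only give linear growth, so the doubling actually comes from a sharper cancellation: the leading part of $H(t^{(k)})[w^{(k)},x^{(k)};F]^{-1}$ must match $[z_{j-1}^{(k)},\alpha;F]^{-1}$ up to $O((z_{j-1}^{(k)}-\alpha))$-type terms, which are themselves high order. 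Sorting out exactly which error terms are $O((e^{(k)})^2)$ versus $O((z_{j-1}^{(k)}-\alpha))$ and bookkeeping them correctly through all $m$ steps is the main obstacle; the computation of $t^{(k)} - I$ and the verification that $H(t^{(k)})[w^{(k)},x^{(k)};F]^{-1} - [z_{j-1}^{(k)},\alpha;F]^{-1}$ has the right order — which is precisely where the conditions $H_1 = -1$ and $H(0)=1$ are consumed — is the technical heart of the argument, and I expect the bulk of the proof to be the careful symbolic expansion needed to confirm it.
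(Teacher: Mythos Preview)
Your approach is essentially the paper's: Taylor expand $F$ around $\alpha$, use the Genocchi--Hermite expansion of divided differences, compute $[w^{(k)},x^{(k)};F]^{-1}$ as a Neumann series, verify $z_1^{(k)}-\alpha = O(e_k^2)$, expand $t^{(k)}$ and $H(t^{(k)})$, and then induct on the steps. You have all the pieces.

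The one real problem is your self-correction at the end. You first write (correctly) that if $z_{j-1}^{(k)}-\alpha = O(e_k^{p})$ then $z_j^{(k)}-\alpha = O(e_k^{p+2})$, and then you say ``wait, that would only give linear growth'' and start looking for a sharper cancellation that would yield doubling. There is no doubling, and none is needed: the order is $2m$, not $2^m$. Starting from $p=2$ at $j=1$ and adding $2$ at each of the remaining $m-1$ steps gives exactly $2m$. So the mechanism you wrote down first is the right one; do not abandon it.

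Concretely, the key computation (this is equation~\eqref{eq:Hdif} in the paper) is that
\[
H\bigl(t^{(k)}\bigr)\,[w^{(k)},x^{(k)};F]^{-1} \;=\; \bigl(I - G_2\,e_k^2\bigr)\,F'(\alpha)^{-1} + O(e_k^{3}),
\]
i.e.\ the \emph{linear} term in $e_k$ cancels. This happens because $[z_1^{(k)},v^{(k)};F] = F'(\alpha) + O(e_k^2)$ (both arguments are $O(e_k^2)$ away from $\alpha$), so $t^{(k)}-I$ shares its $O(e_k)$ term $X_1 e_k$ with $[w^{(k)},x^{(k)};F]^{-1}F'(\alpha)-I$; then $H(I)=I$ and $H_1=-1$ force $H(t^{(k)}) = I - X_1 e_k + O(e_k^2)$, and $(I - X_1 e_k)(I + X_1 e_k) = I + O(e_k^2)$. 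With this in hand the inductive step is immediate:
\[
z_j^{(k)}-\alpha \;=\; (z_{j-1}^{(k)}-\alpha) - \bigl(I - G_2 e_k^2\bigr)\bigl((z_{j-1}^{(k)}-\alpha) + C_2(z_{j-1}^{(k)}-\alpha)^2 + \cdots\bigr) + O(e_k^3\cdot e_{z_{j-1}}),
\]
so the leading survivors are $G_2\,e_k^2\,e_{z_{j-1}}$ and $C_2\,e_{z_{j-1}}^2$. For $j\geq 2$ the first of these dominates and gives $e_{z_j} = O(e_k^{2j})$. There is no need to compare $H(t^{(k)})[w^{(k)},x^{(k)};F]^{-1}$ with $[z_{j-1}^{(k)},\alpha;F]^{-1}$; comparing with $F'(\alpha)^{-1}$ is enough.
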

\begin{proof}
We are going to perform the proof by induction on the number of steps $m$. We start with the case where $m=1$.

We first obtain the Taylor expansion of $F\left(x^{(k)}\right)$ around $\alpha$, where $e_k=x^{(k)}-\alpha$:
\begin{equation*}
F\left(x^{(k)}\right)=F'(\alpha)\left(e_k+C_2e_k^2+C_3e_k^3+C_4e_k^4\right)+O\left(e_k^5\right),
\end{equation*}
where $C_j=\dfrac{1}{j}F'(\alpha)^{-1}F^{(j)}(\alpha)\in\mathcal{L}_j(X,X)$, being $\mathcal{L}_j(X,X)$ the set of $j$-linear functions, for $j=2,3,\ldots$ Then, applying derivatives to the Taylor expansion of $F\left(x^{(k)}\right)$ around $\alpha$, we obtain
\begin{align*}
F'\left(x^{(k)}\right)&=F'(\alpha)\left(I+2C_2e_k+3C_3e_k^2+4C_4e_k^3\right)+O\left(e_k^4\right),\\
F''\left(x^{(k)}\right)&=F'(\alpha)\left(2C_2+6C_3e_k+12C_4e_k^2\right)+O\left(e_k^3\right),\\
F'''\left(x^{(k)}\right)&=F'(\alpha)\left(6C_3+24C_4e_k\right)+O\left(e_k^2\right).
\end{align*}
By denoting $e_w$ as $ w^{(k)}-\alpha$, and using the definition of $w^{(k)}=x^{(k)}+\beta F(x^{(k)})$, then we obtain
\begin{align*}
    e_w=w^{(k)}-\alpha&=x^{(k)}-\alpha+\beta F\left(x^{(k)}\right)\\
    &=e_k+\beta F'(\alpha)\left(e_k+C_2e_k^2\right)+O\left(e_k^3\right)\\
    &=(I+\beta F'(\alpha))e_k+\beta F'(\alpha)C_2e_k^2+O\left(e_k^3\right).
\end{align*}
Let us now calculate $\left[w^{(k)},x^{(k)};F\right]$ using the Genochi-Hermite formula with $h=w^{(k)}-x^{(k)}=e_w-e_k$ and the last equations:
 \begin{eqnarray*}
\left[w^{(k)},x^{(k)};F\right]&=&F'\left(x^{(k)}\right)+\dfrac{1}{2}F''\left(x^{(k)}\right)h+\dfrac{1}{6}F'''\left(x^{(k)}\right)h^2+O\left(h^3\right)\\
&=&F'(\alpha)\left(I + C_2\left(2I + \beta F'(\alpha)\right)e_k +
 \left(3C_3 +  3\beta C_3 F'(\alpha) +  \beta C_2 F'(\alpha)C_2 + \beta^2 C_3 F'(\alpha)^2\right)e_k^2\right)\\
 &&+O\left(e_k^3\right).
 \end{eqnarray*}
We now obtain the inverse of the divided difference operator $
\left[w^{(k)},x^{(k)};F\right]$. It has the expression $
\left[w^{(k)},x^{(k)};F\right]^{-1}=\left(I+X_1e_k+X_2e_k^2+O\left(e_k^3\right)\right)F'(\alpha)^{-1}$, where
\begin{align*}
    X_1&=-C_2\left(2I + \beta F'(\alpha)\right),\\
    X_2&=(C_2\left(2I + \beta F'(\alpha)\right))^2-\left(3C_3 +  3\beta C_3 F'(\alpha) +  \beta C_2 F'(\alpha)C_2 + \beta^2 C_3 F'(\alpha)^2\right).
\end{align*}
Then,
\begin{align*}
    z_1^{(k)}-\alpha&=x^{(k)}-\alpha-\left[w^{(k)},x^{(k)};F\right]^{-1}F\left(x^{(k)}\right)\\
    &=e_k-\left(I+X_1e_k+X_2e_k^2+O\left(e_k^3\right)\right)\left(e_k+C_2e_k^2+O\left(e_k^3\right)\right)\\
    &=e_k-\left(e_k+C_2e_k^2+X_1e_k^2+O\left(e_k^3\right)\right)\\
    &=-(C_2+X_1)e_k^2+O\left(e_k^3\right)\\
    &=Y_2e_k^2+O\left(e_k^3\right),
\end{align*}
being $Y_2=-(C_2+X_1)$. Thus, it is proved that the $1$-step method has order 2.

For $m=2$, we calculate $\left[v^{(k)},z_1^{(k)};F\right]$, using the Genochi-Hermite formula with $h=v^{(k)}-z_1^{(k)}=-\delta F\left(z_1^{(k)}\right)$ and the last equations:
 \begin{align*}
\left[v^{(k)},z_1^{(k)};F\right]&=F'\left(z_1^{(k)}\right)+\dfrac{1}{2}F''\left(z_1^{(k)}\right)h+\dfrac{1}{6}F'''\left(z_1^{(k)}\right)h^2+O\left(h^3\right)\\
&=F'(\alpha)\left(I + C_2\left(2-\delta F'(\alpha)\right)C_2\left(I + \beta F'(\alpha)\right)e_k^2\right)+O\left(e_k^3\right).
 \end{align*}
It follows that,
\begin{align*}
    t^{(k)}&=\left[w^{(k)},x^{(k)};F\right]^{-1}\left[v^{(k)},z_1^{(k)};F\right]\\
    &=\left(I+X_1e_k+X_2e_k^2+O\left(e_k^3\right)\right)\left(I + C_2\left(2-\delta F'(\alpha)\right)C_2\left(I + \beta F'(\alpha)\right)e_k^2\right)+O\left(e_k^3\right)\\
    &=I+X_1e_k+(X_2+C_2\left(2-\delta F'(\alpha)\right)C_2\left(I+ \beta F'(\alpha)\right))e_k^2+O\left(e_k^3\right)\\
    &=I+X_1e_k+(X_2+C_2\left(2-\delta F'(\alpha)\right)C_2\left(I+ \beta F'(\alpha)\right))e_k^2+O\left(e_k^3\right),
\end{align*}
where $T_2=X_2+C_2\left(2-\delta F'(\alpha)\right)C_2\left(I+ \beta F'(\alpha)\right)$.

It follows that
\begin{align*}
    H\left(t^{(k)}\right)&=H(I)+H_1\left(t^{(k)}-I\right)+\dfrac{1}{2}H_2\left(t^{(k)}-I\right)^2+O\left(\left(t^{(k)}-I\right)^3\right)\\
    &=I-\left(t^{(k)}-I\right)+\dfrac{1}{2}H_2\left(t^{(k)}-I\right)^2+O\left(\left(t^{(k)}-I\right)^3\right)\\
    &=I-\left(X_1e_k+T_2e_k^2\right)+\dfrac{1}{2}H_2X_1^2e_k^2+O\left(e_k^3\right)\\
    &=I-X_1e_k+\left(\dfrac{1}{2}H_2X_1^2-T_2\right)e_k^2+O\left(e_k^3\right).
\end{align*}
Thus,
\begin{equation}\label{eq:Hdif}
\begin{split}
   H\left(t^{(k)}\right)\left[w^{(k)},x^{(k)};F\right]^{-1}&=\left(I-X_1e_k+\left(\dfrac{1}{2}H_2X_1^2-T_2\right)e_k^2\right)\left(I+X_1e_k+X_2e_k^2\right)F'(\alpha)^{-1}+O\left(e_k^3\right)\\
    &= \left(I-\left(\left(\dfrac{1}{2}H_2-1\right)X_1^2-T_2+X_2\right)e_k^2\right)F'(\alpha)^{-1}+O\left(e_k^3\right)\\
    &=\left(I-G_2e_k^2\right)F'(\alpha)^{-1}+O\left(e_k^3\right),
    \end{split}
\end{equation}
where $G_2=\left(\dfrac{1}{2}H_2-1\right)X_1^2-T_2+X_2$. So,
\begin{align*}
  z_2^{(k)}-\alpha&=z_1^{(k)}-\alpha -H\left(t^{(k)}\right)\left[w^{(k)},x^{(k)};F\right]^{-1}F\left(z_1^{(k)}\right).
    \end{align*}
    Denoting $e_{z_1}=z_1^{(k)}-\alpha$, then
    \begin{align*}
  z_2^{(k)}-\alpha &=z_1^{(k)}-\alpha-\left(I-G_2e_k^2+O\left(e_k^3\right)\right)\left(e_{z_1}+C_2e_{z_1}^2+O\left(e_{z_1}^3\right)\right)\\
   &=e_{z_1}-\left(e_{z_1}-G_2e_k^2e_{z_1}+C_2e_{z_1}^2\right)+O\left(e_k^5\right)\\
   &=G_2e_k^2e_{z_1}-C_2e_{z_1}^2+O\left(e_k^5\right)\\
   &=(G_2Y_2-C_2Y_2^2)e_k^4+O\left(e_k^5\right)\\
   &=(G_2-C_2Y_2)Y_2e_k^4+O\left(e_k^5\right).
\end{align*}
Thus, it is proved that the two-step method has order $4$.

We assume, by induction on the number of steps, that for $m-1$ the error equation is
\begin{align*}
e_{z_{m-1}}&=G_2^{m-3}\left(G_2-C_2Y_2\right)Y_2e_k^{2(m-1)}+O\left(e_k^{2m-1}\right),
\end{align*}
being $e_{z_{m-1}}=z_{m-1}^{(k)}-\alpha$. Then,
\begin{align*}
e_{k+1}&=e_{z_{m-1}}-H\left(t^{(k)}\right)\left[w^{(k)},x^{(k)};F\right]^{-1}F\left(z^{(k)}_{m-1}\right)\\
&=e_{z_{m-1}}-\left(I-G_2e_k^2+O\left(e_k^3\right)\right)\left(e_{z_{m-1}}+C_2e_{z_{m-1}}^2+O\left(e_{z_{m-1}}^3\right)\right)\\
&=G_2e_k^2e_{z_{m-1}}+O\left(e_k^{2m+1}\right)\\
&=G_2^{m-2}(G_2-C_2Y_2)Y_2e_k^{2m}+O\left(e_k^{2m+1}\right).
\end{align*}
Thus, it is proved that the order of convergence of the $m$-step method of the family (\ref{Method}) is $2m$.
\end{proof}
In particular, by the expression of $G_2$, if $H_2=2$, we have
$$G_2=-C_2(2I-\delta F'(\alpha))C_2(I+\beta F'(\alpha)),$$
and also
$$(G_2-C_2Y_2)Y_2=-C_2\left(1-\delta F'(\alpha)\right)C_2\left(I+ \beta F'(\alpha)\right)C_2(I+\beta F'(\alpha)).$$
Thus, it is obtained that, for $m\geq 2$,
\begin{equation}\label{eq:Error3}
e_{k+1}\sim (I-\delta F'(\alpha)) \left(2I-\delta F'(\alpha)\right)^{m-2}\left(I+\beta F'(\alpha)\right)^me_k^{2m}.
\end{equation}

\section{Introducing memory}
As mentioned in the Introduction, one way to obtain iterative methods is to modify known methods with the intention of improving them in some aspect, either by eliminating the derivatives used in the method or by increasing the order.

One way to try to increase the order of convergence is to introduce memory to the iterative method. When we introduce memory what we do is to use the previous iterates and the functional evaluations that we have already calculated, in order to increase the order without carrying out new evaluations and, therefore, make the methods more efficient and optimal.

In this case, we replace parameters used by our methods with an expression that combines functional evaluations of previous iterates and the iterates themselves.

The error equation, if $H_2=2$ and $m\geq 2$, is
\begin{equation}\label{eq:Error}
e_{k+1}\sim (I-\delta F'(\alpha)) (2I-\delta F'(\alpha))^{m-2}(I+\beta F'(\alpha))^me_k^{2m}.
\end{equation}
If it should happen that $\beta=-F'(\alpha)^{-1}$ or $\delta=2F'(\alpha)^{-1}$, then the order of the $m$-step method would be at least $2m+1$. But we do not know $\alpha$ or $F'(\alpha)$, so we cannot substitute parameters in this way. What we can do is approximate $F'(\alpha)$ by a combination of functional evaluations of the iterates in $F$. We do this with previous iterates and not with current iterates so as not to increase the number of functional evaluations.

One known way to approximate $F'(\alpha)$ is by the divided difference operators. In this case, we use $[x^{(k)},x^{(k-1)};F]$, so an approximation of parameters will be as follows
$$\beta_k=- [x^{(k)},x^{(k-1)};F]^{-1}\hspace{0.2cm}\text{ and }\hspace{0.2cm} \delta_k=2[x^{(k)},x^{(k-1)};F]^{-1}.$$
If we replace parameters $\beta$ and $\delta$ of the iterative family (\ref{Method}) by the previous approximations, we obtain a new family of iterative methods, which we denote by $SWD_m$. We check now that the order of convergence of this family is $m+\sqrt{m^2+2m-2}$, for $m\geq 2$.

The divided difference operator defined by Kurchatov, which has the expression, $[2x-y,y;F]$ obtains good approximations, so in this case we also use it to approximate $F'(\alpha)$. The second way to approximate parameters is as follows
$$\beta_k=- [2x^{(k)}-x^{(k-1)},x^{(k-1)};F]^{-1}\hspace{0.2cm}\text{ and }\hspace{0.2cm} \delta_k=2[2x^{(k)}-x^{(k-1)},x^{(k-1)};F]^{-1}.$$
If we replace parameters $\beta$ and $\delta$ of the iterative family (\ref{Method}) by the previous approximations, we obtain a new family of iterative methods, which we denote by $SWK_m$. We check that the order of convergence of this family is $m+\sqrt{m^2+4m-4}$, for $m\geq 2$.

\begin{theorem}\label{TeoremaM1}
Let $F: \mathbb{R} ^n \longrightarrow \mathbb{R}^n$ be a sufficiently differentiable function in an neighbourhood of $\alpha$, which we denote by $D \subset \mathbb{R}^n$, such that $F(\alpha)=0$. We assume that $F'(\alpha)$ is non singular. Let $H(t)$ be a real matrix function that verifies that $H(0)=1$, $H_1=-1$ and $H_2=2$. Then, taking an estimate $x^{(0)}$ close enough to $\alpha$, the sequence of iterates $\{x^{(k)}\}_{k\geq 0}$ generated by the $SWD_m$ method converges to $\alpha$ with order $m+\sqrt{m^2+2m-2}$ and the sequence of iterates $\{x^{(k)}\}_{k\geq 0}$ generated by the $SWK_m$ method converges to $\alpha$ with order of convergence $m+\sqrt{m^2+4m-4}$, for $m\geq 2$.
\end{theorem}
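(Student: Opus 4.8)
The plan is to bootstrap from the memory-free error equation~(\ref{eq:Error}) and then run the classical recurrence-of-orders argument for iterative schemes with memory. I would first note that near $\alpha$ the matrices $[x^{(k)},x^{(k-1)};F]$ and $[2x^{(k)}-x^{(k-1)},x^{(k-1)};F]$ are close to the nonsingular $F'(\alpha)$, hence invertible, so $\beta_k$ and $\delta_k$ are well defined and, once the process is started from two sufficiently accurate estimates (or one memory-free step), bounded and convergent to $-F'(\alpha)^{-1}$ and $2F'(\alpha)^{-1}$. Since at iteration $k$ the quantities $\beta_k,\delta_k$ act as fixed parameters, the computation in the proof of Theorem~\ref{Teorema1} carries over with $\beta$ replaced by $\beta_k$ and $\delta$ by $\delta_k$; as $H_2=2$ this gives
\begin{equation*}
e_{k+1}\sim (I-\delta_k F'(\alpha))\,(2I-\delta_k F'(\alpha))^{m-2}(I+\beta_k F'(\alpha))^m\,e_k^{2m},
\end{equation*}
with $k$-dependent coefficients. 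Using $\delta_k=-2\beta_k$ this collapses to $e_{k+1}\sim 2^{m-2}(I-\delta_k F'(\alpha))(I+\beta_k F'(\alpha))^{2m-2}e_k^{2m}$ with $I-\delta_k F'(\alpha)=I+2\beta_k F'(\alpha)\to -I$, so the whole dependence of the asymptotic constant on $e_{k-1}$ sits in the factor $(I+\beta_k F'(\alpha))^{2m-2}$.

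Next I would expand $I+\beta_k F'(\alpha)$ in powers of $e_k,e_{k-1}$ through the Genocchi--Hermite formula. For $SWD_m$, $[x^{(k)},x^{(k-1)};F]=F'(\alpha)\bigl(I+C_2(e_k+e_{k-1})+\cdots\bigr)$, so $I+\beta_k F'(\alpha)=I-[x^{(k)},x^{(k-1)};F]^{-1}F'(\alpha)=C_2(e_k+e_{k-1})+\cdots\sim C_2 e_{k-1}$ (as $e_k$ is of higher order than $e_{k-1}$), hence $e_{k+1}\sim L_D\,e_{k-1}^{\,2m-2}e_k^{2m}$. For $SWK_m$ the Kurchatov operator expands as $[2x^{(k)}-x^{(k-1)},x^{(k-1)};F]=F'(\alpha)\bigl(I+2C_2 e_k+C_3 e_{k-1}^{2}+\cdots\bigr)$ --- its first-order part reduces to $2C_2 e_k$, the term linear in $e_{k-1}$ being absent --- so $I+\beta_k F'(\alpha)=2C_2 e_k+C_3 e_{k-1}^{2}+\cdots$; provided the order exceeds $2$ (verified a posteriori) one has $e_k$ of higher order than $e_{k-1}^2$, hence $I+\beta_k F'(\alpha)\sim C_3 e_{k-1}^{2}$ and $e_{k+1}\sim L_K\,e_{k-1}^{\,4m-4}e_k^{2m}$.

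To close the recurrence of orders, suppose $\{x^{(k)}\}$ has R-order $p$, so $e_k\sim L\,e_{k-1}^{\,p}$ with $L$ asymptotically constant. Substituting this into both $e_{k+1}\sim L\,e_k^{\,p}$ and $e_{k+1}\sim C\,e_{k-1}^{\,a}e_k^{2m}$ (with $a=2m-2$ for $SWD_m$, $a=4m-4$ for $SWK_m$) and matching the exponents of $e_{k-1}$ yields $p^2=2mp+a$, i.e.\ $p=m+\sqrt{m^2+2m-2}$ for $SWD_m$ and $p=m+\sqrt{m^2+4m-4}$ for $SWK_m$. One then checks $p>1$ in both cases, and $p>2$ for $SWK_m$ --- indeed $p>2m\ge4$ since $m^2+4m-4>m^2$ for $m\ge2$ --- which validates the assumption used in the expansion.

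The step I expect to be the main obstacle is controlling the remainder that~(\ref{eq:Error}) hides. The error equation of Theorem~\ref{Teorema1} carries an unwritten $O(e_k^{2m+1})$ tail, and after the substitution $\beta\mapsto\beta_k$, $\delta\mapsto\delta_k$ one must check that this tail stays negligible against the main term $e_{k-1}^{\,a}e_k^{2m}$ --- which is not automatic, since for $SWK_m$ with $m\ge3$ one has $p<a$, so a bare $O(e_k^{2m+1})$ bound would not suffice. The resolution is structural: every coefficient in the $e_k$-expansion of the memory-free $m$-step error is divisible by $I+\beta F'(\alpha)$ (the expansion vanishes identically when $\beta=-F'(\alpha)^{-1}$), so after the substitution the tail inherits extra factors of $I+\beta_k F'(\alpha)\to0$ and becomes negligible precisely because the computed $p$ is large enough; carrying out this bookkeeping carefully is the technical heart of the argument. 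A second, more formal point is that the recurrence-of-orders step above is the usual heuristic for with-memory schemes and, to be fully rigorous, should be recast through the notion of R-order of convergence, checking that the asymptotic error constants stay bounded away from $0$ and $\infty$; and for $SWK_m$ one must dispel the apparent circularity of taking $e_{k-1}^2$ rather than $e_k$ as the dominant part of $I+\beta_k F'(\alpha)$ before $p$ is known, which the self-consistent value $p>2$ settles.
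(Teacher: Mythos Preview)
Your proposal is correct and follows essentially the same route as the paper's proof: substitute the memory parameters into the error equation~(\ref{eq:Error}), expand $I+\beta_k F'(\alpha)$ (and $2I-\delta_k F'(\alpha)$) via Genocchi--Hermite to extract the $e_{k-1}$ (respectively $e_{k-1}^2$) factor, and solve the resulting characteristic equation $p^2=2mp+a$ for $a=2m-2$ and $a=4m-4$. Your streamlining via $\delta_k=-2\beta_k$ and your explicit discussion of the $O(e_k^{2m+1})$ tail and the $p>2$ self-consistency check go slightly beyond what the paper does, which simply works at the level of the leading term.
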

\begin{proof}
Let us now consider the Taylor expansion of $F\left(x^{(k-1)}\right)$, $F'\left(x^{(k-1)}\right)$ and $F''\left(x^{(k-1)}\right)$ around $\alpha$ in the same way as considered in Theorem \ref{Teorema1}.
On the one hand, let us start calculating $\left[x^{(k)},x^{(k-1)};F\right]$, using the Genochi-Hermite formula with $h=e_k-e_{k-1}$, thus
\begin{align*}
 \left[x^{(k)},x^{(k-1)};F\right]&=F'(\alpha)\left(I+C_2(e_k+e_{k-1})\right)+O_2\left(e_k,e_{k-1}\right),
\end{align*}
where $O_2(e_k,e_{k-1})$ denotes all terms in
where the sum of exponents of $e_k$ and $e_{k-1}$ is at least $2$.
Then, the inverse of this divided difference operator is:
\begin{align*}
 \left[x^{(k)},x^{(k-1)};F\right]^{-1}&=(I-C_2(e_k+e_{k-1}))F'(\alpha)^{-1}+O_2\left(e_k,e_{k-1}\right).
\end{align*}
Therefore,
$\beta_k=-(I-C_2(e_k+e_{k-1}))F'(\alpha)^{-1}+O_2\left(e_k,e_{k-1}\right)$ and
 \begin{align*}
 I+\beta_kF'(\alpha)
 &=C_2(e_k+e_{k-1}))+O_2(e_{k-1},e_k).
 \end{align*}
 Furthermore, $\delta_k=2(I-C_2(e_k+e_{k-1}))F'(\alpha)^{-1}+O_2\left(e_k,e_{k-1}\right)$ and
 \begin{align*}
 2I-\delta_kF'(\alpha)
 &=2C_2(e_k+e_{k-1}))+O_2(e_{k-1},e_k).
 \end{align*}
 Thus, $I+\beta_kF'(\alpha)\sim e_{k-1}$ and  $2I-\delta_kF'(\alpha)\sim e_{k-1}$.

By the error equation (\ref{eq:Error}) and the above relation we have
\begin{equation}\label{eq:error:memoria}
e_{k+1}\sim e_{k-1}^{m-2}e_{k-1}^me_k^{2m}\sim e_{k-1}^{2m-2}e_{k}^{2m}.
\end{equation}
On the other hand, suppose that the R-order of the method is at least $p$. Therefore, it is satisfied
$$e_{k+1}\sim D_{k,p}e_k^p,$$
where $D_{k,p}$ tends to the asymptotic error constant, $D_p$, when $k\xrightarrow{}\infty$.
Then, one has that
\begin{equation}\label{igual1_orden_memk1}
    e_{k+1}\sim D_{k,p}\left(D_{k-1,p}e_{k-1}^p\right)^p=D_{k,p}D_{k-1,p}^pe_{k-1}^{p^2}.
\end{equation}
In the same way that the relation (\ref{eq:error:memoria}) is obtained, it follows that
\begin{equation}\label{igual2_orden_memK1}
    e_{k+1}\sim e_{k-1}^{2m-2}\left(D_{k-1,p}e_{k-1}^p\right)^{2m}=D_{k-1,p}^{2m}e_{k-1}^{2m-2+2mp}.
\end{equation}
Then by equaling the exponents of $e_{k-1}$ in (\ref{igual1_orden_memk1}) and (\ref{igual2_orden_memK1}), we obtain
$$p^2=2mp+2m-2,$$
whose only positive solution is the order of convergence of the $SWD_m$ method, being $p=m+\sqrt{m^2+2m-2}$ for $m\geq 2$.

On the other hand, applying Genochi-Hermite formula on Kurchatov divided differences, we obtain
\begin{align*}
 \left[2x^{(k)}-x^{(k-1)},x^{(k-1)};F\right]&=F'(\alpha)\left(I+2C_2e_k-2C_3e_{k-1}e_k+C_3e_{k-1}^2+4C_3e_k^2\right)+O_3\left(e_k,e_{k-1}\right).
\end{align*}
Then, the inverse of this divided difference operator is:
\begin{align*}
 \left[2x^{(k)}-x^{(k-1)},x^{(k-1)};F\right]^{-1}&=\left(I-2C_2e_k-C_3e_{k-1}^2+2C_3e_{k-1}e_k+4(C_2^2-C_3)e_k^2\right)F'(\alpha)^{-1}\\&+O_3\left(e_k,e_{k-1}\right).
\end{align*}
Therefore,
 \begin{align*}
 I+\beta_kF'(\alpha)&=2C_2e_k+C_3e_{k-1}^2-2C_3e_{k-1}e_k-4\left(C_2^2-C_3\right)e_k^2+O_3\left(e_k,e_{k-1}\right),\\
 2I-\delta_kF'(\alpha)&=4C_2e_k+2C_3e_{k-1}^2-4C_3e_{k-1}e_k-8\left(C_2^2-C_3\right)e_k^2+O_3\left(e_k,e_{k-1}\right).
 \end{align*}
 Thus, $I+\beta_kF'(\alpha)$ and $2I-\delta_k F'(\alpha)$ can have the behaviour of $e_k$, $e_ke_{k-1}$, $e_{k}^2$ or $e_{k-1}^2$.
 Obviously the factors $e_ke_{k-1}$ and $e_{k}^2$ tend faster to 0 than $e_k$, so we have to see which of the factors converge faster, $e_k$ or $e_{k-1}^2$.

 Assume that the R-order of the method is at least $p$. Therefore, it is satisfied
$$e_{k+1}\sim {D_{k,p}e_{k}^p},$$
where $D_{k,p}$ tends to the asymptotic error constant, $D_p$, when $k\xrightarrow{}\infty$.
  Then we have that $$\dfrac{e_k}{e_{k-1}^2}\sim \dfrac{D_{k-1,p}e_{k-1}^p}{e_{k-1}^2}.$$
 Then, if $p>2$, we obtain that $\dfrac{D_{k-1,p}e_{k-1}^p}{e_{k-1}^2}$ converges to $0$ when $k\xrightarrow{}\infty$. Thus, if $p>2$, then $I+\beta_kF'(\alpha)\sim e_{k-1}^2$ and $2I-\delta_k F'(\alpha) \sim e_{k-1}^2$.

 From the error equation (\ref{eq:Error}) and the above relation we obtain
\begin{equation}\label{eq:error:memoria2}
e_{k+1}\sim  \left(e_{k-1}^2\right)^{m-2}\left(e_{k-1}^2\right)^me_k^{2m}\sim e_{k-1}^{4m-4}e_k^{2m}.
\end{equation}
On the other hand, by assuming that the R-order of the method is at least $p$ we have the relation (\ref{igual1_orden_memk1}). In the same way that we obtain the relation (\ref{eq:error:memoria2}), we obtain
\begin{equation}\label{igual2_orden_mem2}
    e_{k+1}\sim e_{k-1}^{4m-4}\left(D_{k-1,p}e_{k-1}^p\right)^{2m}=D_{k-1,p}^2e_{k-1}^{2mp+4m-4}.
\end{equation}
Then, equaling the exponents of  $e_{k-1}$ in (\ref{igual1_orden_memk1}) and (\ref{igual2_orden_mem2}), it follows that
$$p^2=2mp+4m-4,$$
whose only positive solution is the order of convergence of the $SWK_m$ method, being $p=m+\sqrt{m^2+4m-4}$, $m\geq 2$.
\end{proof}

\section{Efficiency study}
One of the dilemmas we encounter when designing or improving iterative methods for solving systems of non-linear equations is whether the increase in the speed of convergence with respect to other methods is "worth it" from the point of view of computational cost. To solve these dilemmas, we make use of the concept of efficiency, which is often measured using the efficiency index, defined in \cite{traub64}:
$$I=p^{\dfrac{1}{d}},$$
where $p$ is the order of convergence and $d$ is the number of functional evaluations per iteration.

This criterion for comparing methods is very useful as it establishes a relationship between the order of convergence of a method and the number of functional evaluations it performs per iteration.

Typically, in many publications, use is made of the definition of efficiency index for the multidimensional case, taking into account that, for a system of size $n \times n$, $n$ functional evaluations are required for a vector function $F$, $n^2$ functional evaluations for a Jacobian matrix $J_F$ and $n^2-n$ functional evaluations for a first order divided-difference operator of the form $[x,y; F]$ (see \cite{Potra}) with $[x,y;F]_{ij}$ defined for all $i,j=1,\ldots,n$ as:
$$[x,y;F]_{ij}=\dfrac{1}{x_j-y_j}[F_i(x_1,\ldots,x_j,y_{j+1},\ldots,y_p)-F_i(x_1,\ldots,x_{j-1},y_j,\ldots,y_p)].$$

\begin{itemize}
    \item For $m=1$, iterative family (\ref{Method}) performs an evaluation of $F$ and calculate a divided difference operator, so the number of functional evaluations is:
    \[n+(n^2-n)=n^2.\]
    Therefore, the efficiency ratio is:
    \[2^{^{\dfrac{1}{n^2}}}.\]
    \item For $m=2$ it needs two functional evaluations of $F$ and two divided difference operators, so the number of functional evaluations is:
    \[2n+2(n^2-n)=2n^2.\]
    So, the efficiency ratio is:
    \[4^{^{\dfrac{1}{2n^2}}}=2^{^{\dfrac{1}{n^2}}}.\]
    \item For $m>2$ it performs $m$ functional evaluations of $F$ and two divided difference operators, so the number of functional evaluations is:
    \[ m n+2(n^2-n)=2n^2+(m-2) n.\]
    The resulting efficiency ratio is
    \[(2m)^{\dfrac{1}{2n^2+(m-2) n}}.\]
\end{itemize}
That is, we have two cases:
when $m=1$ we have
 \[I_1=2^{^{\dfrac{1}{n^2}}},\]
  and when $m>1$ we have
 \[I_m=(2m)^{\dfrac{1}{2n^2+(m-2)n}}.\]
  Since $I_1=I_2$, then to obtain the maximum efficiency rate we only have to study the maximum efficiency rate for $m\geq2$.

  We want to see when the maximum efficiency rate is obtained as a function of the size of the system, that is, as a function of $n$.

  It is not difficult to see that if $n=1$, then $I_1=I_2=2$ and $I_m=(2m)^{\dfrac{1}{m}}$.
It happens that $I_m<2$ for all $m>2$ since $2m<2^m$.
 Thus, being $n=1$, the highest efficiency index will be $I_1=I_2=2$.

 We assume from here on that $n>1$.
 To obtain the maximum we are going to derive the function $I_m$. We obtain that the derivative is
 \[I_m'=I_m\dfrac{2n^2+(m-2)n-mn\ln(2m)}{m(2n^2+(m-2)n)^2}.\]
Since $I_m$ is different from $0$, then a critical point is reached when
 $2n^2+(m-2)n-mn\ln(2m)=0$, that is, $2n+m-2-m\ln(2m)=0$,
 which is equivalent to solving
 \begin{equation}\label{mast}
     (1-\ln(2m))m=2-2n.
 \end{equation}
 We denote by $m^*$ the solution of the previous equation. Then, a critical point of $I_m$ is obtained at $m^*$.

 We now observe the intervals of growth and decay of the function $I_m$. Since $I_m>0$ and $m(2n^2+(m-2)n)^2>0$, we have to study when $2n+m-2-m\ln(2m)$ is negative or positive.

 If $m=2$, then $m=2$, it follows that $2n+2-2-2\ln(4)=2n-2\ln(4)\approx 2n-2\cdot1.3863= 2n-2\cdot1.3863>0$, whenever $n$ is strictly greater than 1. Thus, we have that the function $I_m$ grows up to $m^*$, provided that $n>1$.

 Now we prove that, starting from $m^*$, we obtain that the function decreases. To do this, we take the point $m^*+\varepsilon$ with $\varepsilon>0$ and prove that $I'_{m^*+\varepsilon}<0$, since this will imply that the efficiency index $I_m$ decreases from $m^*$.

 Using the expression for $I'_m$, we get
 \begin{align*}
I'_{m^*+\varepsilon}<0\Longleftrightarrow 2n-2+(1-ln(2(m^*+\varepsilon))(m^*+\varepsilon)<0.
 \end{align*}
 Since we have that $m^*$ verifies the equation (\ref{mast}), then
  \begin{align}\label{desig}
  \begin{split}
I'_{m^*+\varepsilon}<0&\Longleftrightarrow m^*\left(\ln(2m^*)-\ln(2(m^*+\varepsilon))\right)+\varepsilon\left(1-\ln(2(m^*+\varepsilon))\right)<0\\
&\Longleftrightarrow m^*\ln\left(\dfrac{m^*}{m^*+\varepsilon}\right)+\varepsilon\left(1-\ln(2(m^*+\varepsilon))\right)<0\\
&\Longleftrightarrow m^*\ln\left(\dfrac{m^*}{m^*+\varepsilon}\right)<\varepsilon\left(\ln(2(m^*+\varepsilon))-1\right).
\end{split}
 \end{align}
 We know that, since $\varepsilon>0$ and $m^*\geq 2$, the left-hand side of the above inequality is negative. On the other hand, we deduce that the following is satisfied
 $$\ln(2(m^*+\varepsilon))-1>0\Longleftrightarrow 2(m^*+\varepsilon)>e,$$
 and this will always be true because $m^*\geq 2$. Therefore, the right-hand side of (\ref{desig}) is positive, which implies that $I'_{m^*+\varepsilon}<0$.

 Since we have proved that $I'_2>0$, that $I'_{m^*}=0$ and that $I'_{m^*+\varepsilon}<0$, we have that in $m^*$ there is a relative maximum of the function $I_m$.

 It may happen that the solution $m^*$ is not an integer, so if $m^*$ is the solution and $\bar{m}$ is the integer of $m^*$, we calculate in each case whether $I_{\bar{m}}$ is greater or less than $I_{\bar{m}+1}$.

We denote by $M$ the value that verifies to be the maximum of $I_m$ with $m>2$. We want to see if $I_1$ is greater than this maximum or not, but as $I_1=I_2$, then $I_M>I_1$ whenever $M>2$.

 Thus, the number of steps that obtains a higher efficiency rate depending on the size of the system to be solved is analysed.

In Figure \ref{fig:my_label} we see the size of the system in relation to the number of steps for obtaining the best efficiency index. In Figure \ref{fig:my_labe2}, we show for a fixed size of the system, $n=100$, which is the relation between the number of steps and the efficiency index.
 \begin{figure}[H]
     \centering
     \includegraphics[width=\textwidth]{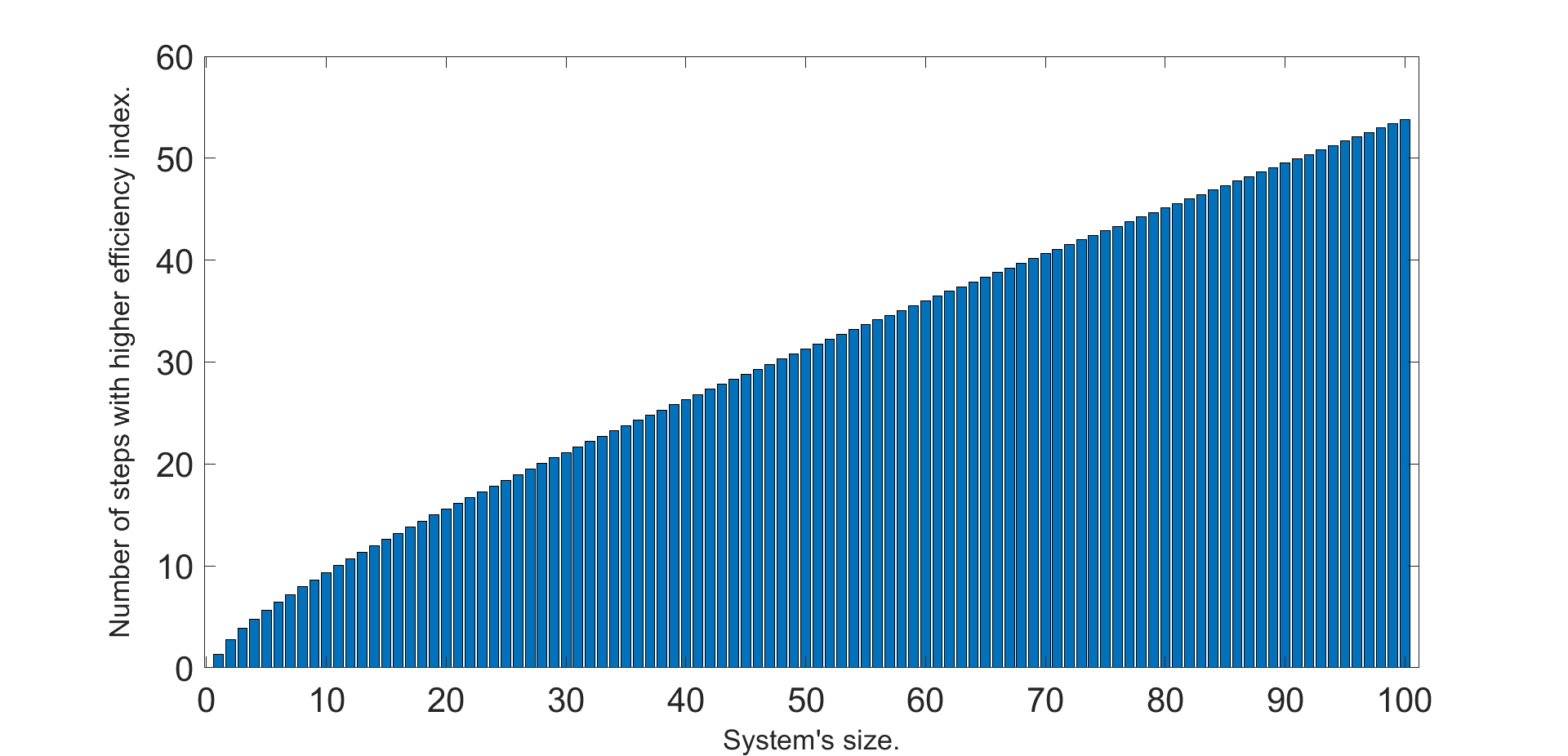}
     \caption{The number of steps that obtains a higher efficiency rate depending on the size of the system to be solved.}
     \label{fig:my_label}
 \end{figure}

 \begin{figure}[H]
     \centering
     \includegraphics[width=\textwidth]{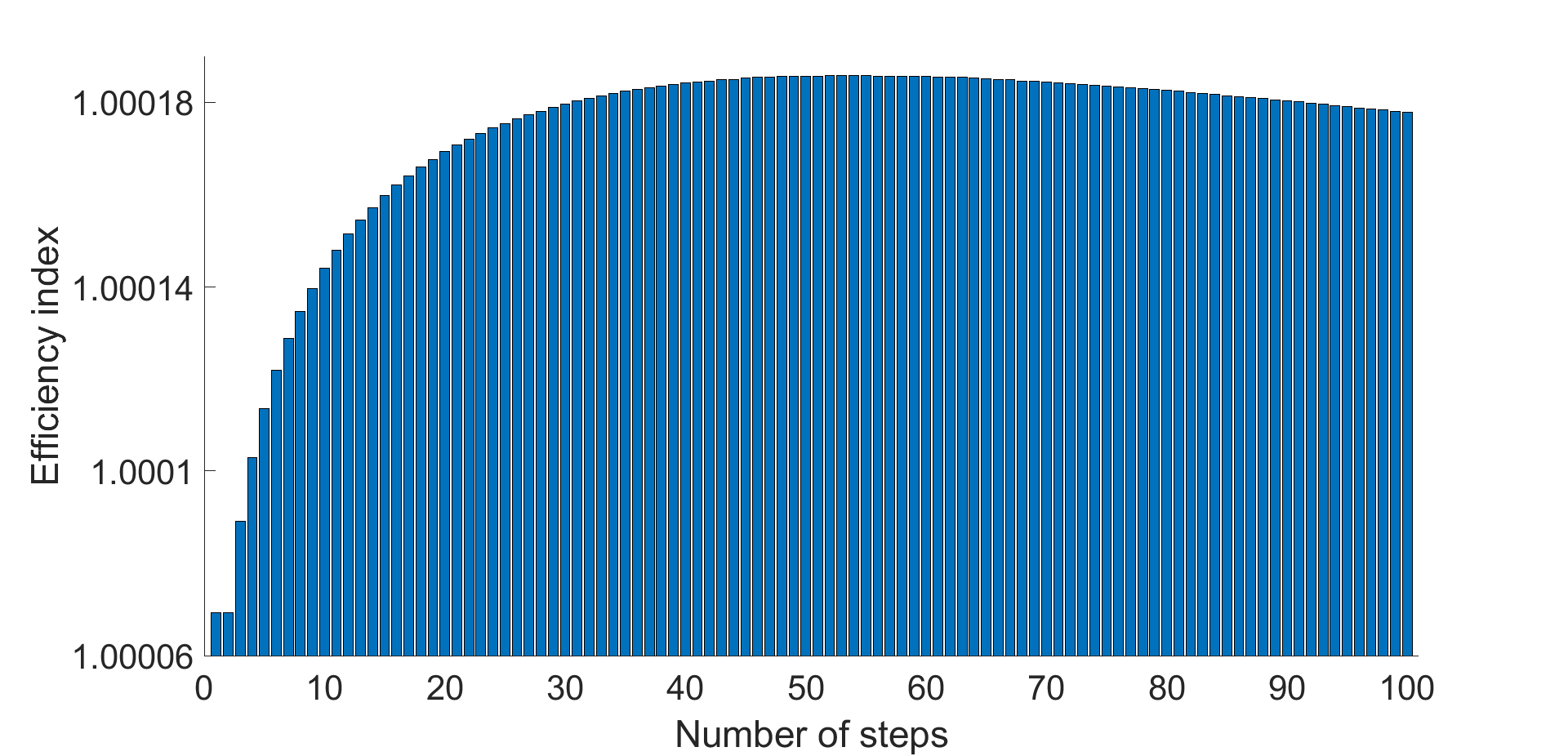}
     \caption{Efficiency rates when the system size is 100.}
     \label{fig:my_labe2}
 \end{figure}

\section{Numerical experiments}
In this section, first, we apply the multistep methods $SW_m$, $SWD_m$ and $SWK_m$ to a nonlinear system to verify that the properties deduced theoretically in the analysis of the family are satisfied, both with and without memory. Moreover, in the last section, we show several dynamical planes for our family of methods with different values of $m$ and different expressions of the weight function $H(t^{(k)})$ to verify that some of the methods of this family, with higher order of convergence than Steffensen, improve or, at least, preserve the dynamics of this method.

We want to approximate the solution of the following nonlinear system with $n$ equations and $n$ unknowns
\begin{align*}
    &F_i(x)=x_i\sin(x_{i+1})=1,\qquad i\in\{1,\ldots,n-1\}\\
    &F_n(x)=x_n\sin(x_{1})=1.
\end{align*}
The approximate solution of this system is $\alpha\approx [1.11415714087193,\ldots,1.11415714087193,\ldots]^T,$ which we try to approximate using the methods $SW_m$, $SWD_m$ and $SWK_m$, for different values of $m$.

For the computational calculations we use MATLAB R2022a, using variable precision arithmetic with 5000 digits, iterating from an initial estimate $x^{(0)}=\left[1. 3,\ldots,1.3\right]^T$ until the following stopping criterion is satisfied: $$|x^{(k+1)}-x^{(k)}\|_2+\|F(x^{(k+1)})\|_2<10^{-300}$$
and the approximated computational order of convergence (ACOC), defined by Cordero and Torregrosa in \cite{Acoc}, which has the following expression:
\[
p \approx ACOC =\frac{\ln\left(\left\|x^{(k+1)}-x^{(k)} \right\|_2/\left\|x^{(k)}-x^{(k-1)}\right\|_2\right)}{\ln\left(\left\|x^{(k)}-x^{(k-1)}\right\|_2/\left\|x^{(k-1)}-x^{(k-2)}\right\|_2\right)}.
\]

Table \ref{Tabla} shows the results obtained by the above methods to solve the system, taking $n=15$ and assuming that the weight function of the family of methods has the expression
$$H(t^{(k)})=I_n-(t^{(k)}-I_n)+(t^{(k)}-I_n)(t^{(k)}-I_n).$$ It is interesting to note that we took such a small system size because, testing with larger system sizes, such as n=100, we obtain the same numerical results, with the only exception that an exponential growth of the computation time is observed.

In addition, the value of parameters is $\beta=\delta=0.1$ for the $SW_m$ method and for the methods with memory $SWD_m$ and $SWK_m$ we use as initial aproximation $x^{(-1)}$, vector $\left[1.4,\ldots,1.4\right]^T$.

The data we compare in Table \ref{Tabla} symbolize, from left to right, the multistep methods used for different steps $m=1,2,3$, the distance between the last two iterations, the value of the function evaluated in the last iteration, the number of iterations needed to verify the stopping criterion, the approximate computational convergence order defined in \cite{Acoc} and the time it takes for each method to find an approximation to $\alpha$, satisfying the required tolerance.
\vspace{0.1cm}
\newline
\begin{table}[H]
\centering
\begin{tabular}{lccccc}
\hline
Method  & $\quad\|x^{(k+1)}-x^{(k)}\|$& $\quad\|F(x^{(k+1)})\|$ & Iteration &ACOC&Time\\\hline
$SW_1$  & $6.32894\times 10^{-439}$& $6.95714\times 10^{-879}$& $9$&$1.99999$ &$43.1406$\\
$SWD_1$  &$9.90630\times 10^{-393}$ & $4.98953\times 10^{-3553}$& $7$&$2.41381$ &$66.7344$\\
$SWK_1$  & $4.60983\times 10^{-643}$& $3.99914\times 10^{-4197}$& $7$& $2.72948$&$62.6406$\\ \hline
$SW_2$  & $2.83143\times 10^{-508}$& $1.74600\times 10^{-2036}$& $5$&$3.99999$ & $55.0469$\\
$SWD_2$  &$6.06526\times 10^{-806}$ &$4.98953\times 10^{-3553}$ & $5$& $4.44483$& $73.7188$\\
$SWK_2$  & $5.74472\times 10^{-984}$& $3.99914\times 10^{-4197}$& $5 $&$4.81006$ & $73.9063$\\ \hline
$SW_3$  & $7.33069\times 10^{-408}$& $1.23838\times 10^{-2452}$& $4$&$5.99999$ & $42.7188$\\
$SWD_3$  & $8.36495\times 10^{-549}$&$4.98953\times 10^{-3553}$ &$4$ &$6.49766$ &$60.1875$\\
$SWK_3$ & $1.19588\times 10^{-610}$& $3.99914\times 10^{-4197}$& $4$& $6.95979$&$57.7813$\\
\hline
\end{tabular}
\caption{Numerical results for $SW_m$, $SWD_m$ and $SWK_m$.}	
\label{Tabla}
\end{table}
We are going to analyse the different methods according to the number of steps $m$. In the case $m=1$, the $SW_1$ method is a modification of Steffensen's method due to the parameter $\beta=0.1$. Taking this fact into account, we observe that the methods with memory approximate the solution $\alpha$ of the problem with a higher convergence speed and in a lower number of iterations. Moreover, the approximation finally obtained is closer to the solution in these cases because the approximation errors are smaller. However, we see that both methods with memory take longer to find the solution of the system because more functional evaluations are performed per iteration, that is, the computational cost increases and, therefore, they take longer than $SW_1$.

On the other hand, if we compare the $SWD_1$ and $SWK_1$ methods, we observe that the method with memory using Kurchatov's divided differences is better than the other method with memory in all aspects. This fact is perceived in that $SWK_1$ approximates the exact solution better by the results of the second and third column, performs the same number of iterations as $SWD_1$, has a higher $ACOC$ and, in addition, takes less time to approximate the solution $\alpha$.

In the case where the methods have $m=2$ steps, that is, when the weight function is already relevant, we observe improvements in almost all aspects with respect to the $m=1$ case. We see that, although the execution times increase a little, we obtain better $ACOC$ in the three cases and, in addition, we perform a smaller number of iterations. If we compare the three methods, we obtain similar conclusions to those obtained in the case $m=1$. The $SWK_2$ method is better in almost all aspects, with the exception of execution time.

Finally, for $m=3$, we obtain the best results. In this case, thanks to the decrease in the number of iterations, we get even shorter execution times than in the $m=1$ case. Moreover, as expected from the convergence theorems, the methods show very high convergence orders. It is important to note that the $SWK_3$ method has increased the $ACOC$ of $SW_3$ by almost one unit.
 \subsection{Example of dynamical planes}
In the following, as part of the numerical experiments, we show the dynamical behaviour of various members of the family of methods studied, using dynamical planes, and in some cases we also analyse the behaviour of the designed iterative methods that have memory. We apply it both to non-linear equations and to systems of equations.

We begin by commenting on the problems we are going to solve:
 \begin{itemize}
     \item The nonlinear equation $p_1:(x-1)^3-1=0$. We now that the roots of this nonlinear equation are: $2$, $\frac{1+\sqrt{3}\text{i}}{2}$ and $\frac{1-\sqrt{3}\text{i}}{2}$.
     \item The system of nonlinear equations, denoted by $p_2$, of which we analyse the behaviour is:
\begin{empheq}[left = \empheqlbrace]{align*}
               x^2-1&= 0,\\
               y^2-1 &= 0,
\end{empheq}
where $(x,y)^T\in \mathbb{R}^2$. We know that the roots of this system are: $(-1,-1)^T$, $(-1, 1)^T$, $(1,-1)^T$ and $(1,1)^T$.
 \end{itemize}
We specify in each case which weight function we are using. Remember that the methods without memory have two parameters, which we set $\delta=\beta=0.1$ to make the dynamical planes.

We start by generating the dynamical planes for the non-linear equation $p_1$. In the case of the methods without memory we generate the dynamical planes as follows.

We denote by $z$ a complex initial estimate. Each point $z$ in the plane is considered as the initial point of the iterative method. In one axis we have $\text{Re}(z)$ and in the other axis $\text{Im}(z)$. Each point $z$ is painted in a different colour depending on the point to which it converges, where is determined that the inicial point converges to one of the solutions if the distance of the iterations to that solution is less than $10^{-3}$.

These dynamical planes have been generated with a grid of $400\times400$ points and a maximum of $80$ iterations per point. We paint in orange the initial points that converge to the root $2$, in green the initial points that converge to the root $\frac{1-\sqrt{3}\text{i}}{2}$ and in purple the initial points that converge to the root $\frac{1+\sqrt{3}\text{i}}{2}$.
 \begin{figure}[H]
   \caption{Dynamical planes of $SW_1$ with different weight functions for $p_1$}
     \label{fig1}
     \centering
     \begin{subfigure}{0.45\textwidth}
     \centering
     \caption{Dynamical plane of $SW_1$ for $p_1$ with $H(t)=1-(t-1)+(t-1)^2$}
     \label{fig:Ste2}
     \includegraphics[width=1.5\textwidth]{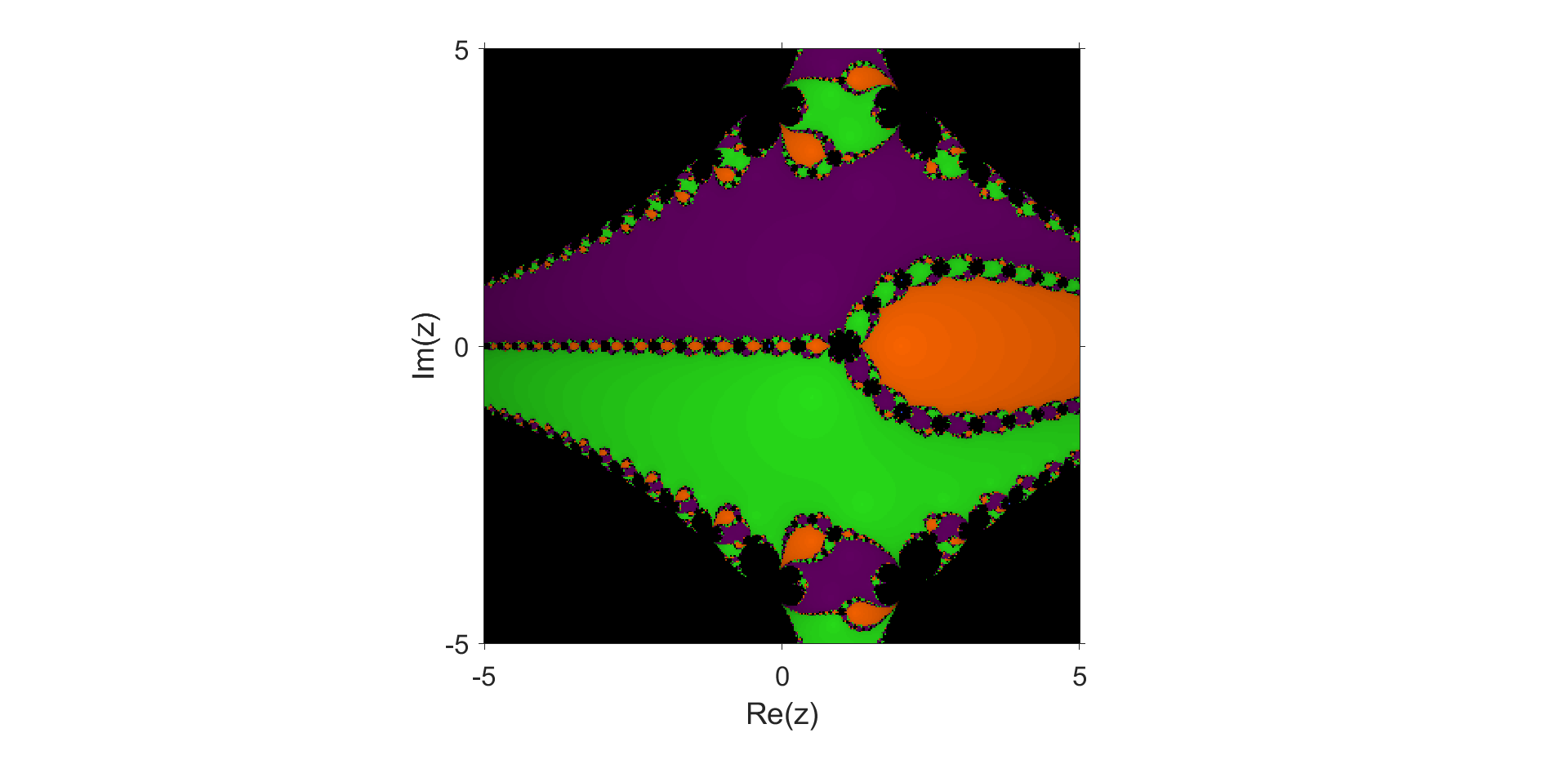}
     \end{subfigure}
     \hfill
     \begin{subfigure}{0.45\textwidth}
     \centering
     \caption{Dynamical plane of $SW_1$ for $p_1$ with $H(t)=1/t$}
     \label{fig:Stef1/t}
     \includegraphics[width=1.5\textwidth]{FIGURES/stefff_b=0.1.png}
     \end{subfigure}
 \end{figure}
 \begin{figure}[H]
     \centering
      \caption{Dynamical planes of $SW_2$ with different weight functions for $p_1$}
     \label{fig2}
     \begin{subfigure}{0.45\textwidth}
     \centering
     \caption{Dynamical plane of $SW_2$ for $p_1$ with $H(t)=1-(t-1)+(t-1)^2$}
     \label{fig:2pasos2}
     \includegraphics[width=1.5\textwidth]{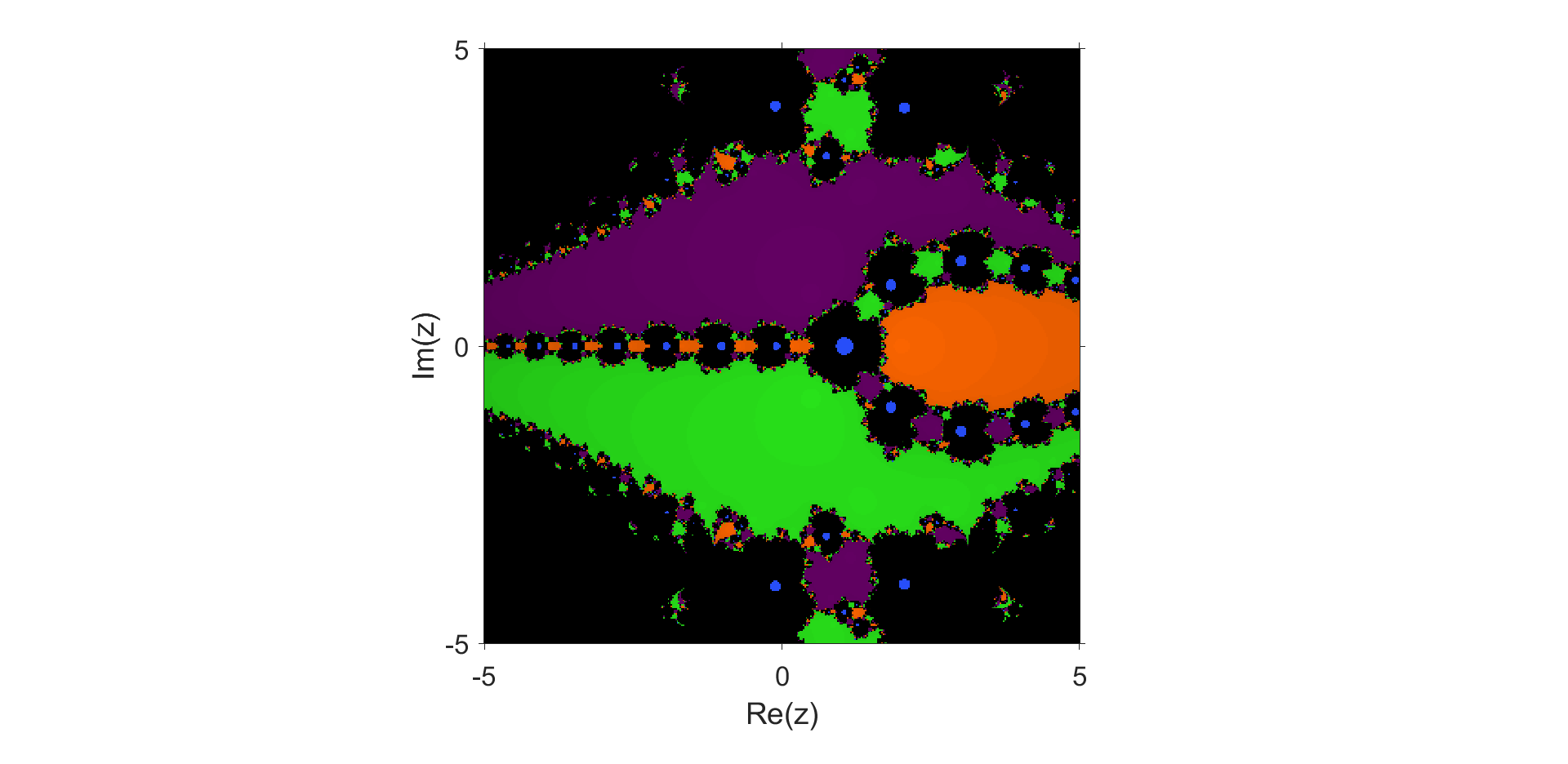}
     \end{subfigure}
     \hfill
     \begin{subfigure}{0.45\textwidth}
     \centering
     \caption{Dynamical plane of $SW_2$ for $p_1$ with $H(t)=1/t$}
     \label{fig:2pasos1/t}
     \includegraphics[width=1.5\textwidth]{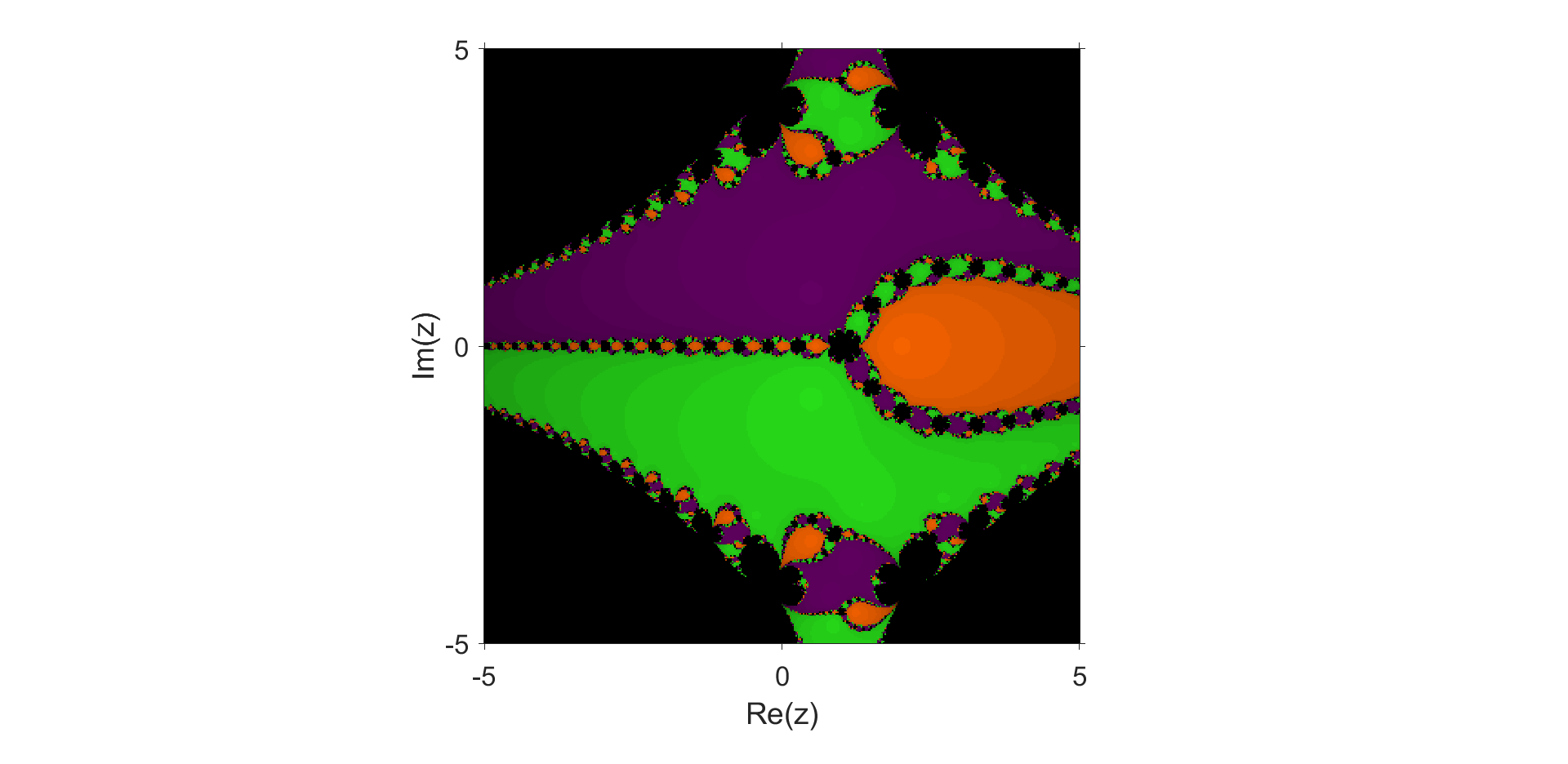}
     \end{subfigure}
 \end{figure}
 \begin{figure}[H]
 \centering
     \caption{Dynamical planes of $SW_3$ with different weight functions for $p_1$}
     \label{fig3}
     \begin{subfigure}{0.45\textwidth}
     \centering
     \caption{Dynamical plane of $SW_3$ for $p_1$ with $H(t)=1-(t-1)+(t-1)^2$}
     \label{fig:3pasos2}
     \includegraphics[width=1.5\textwidth]{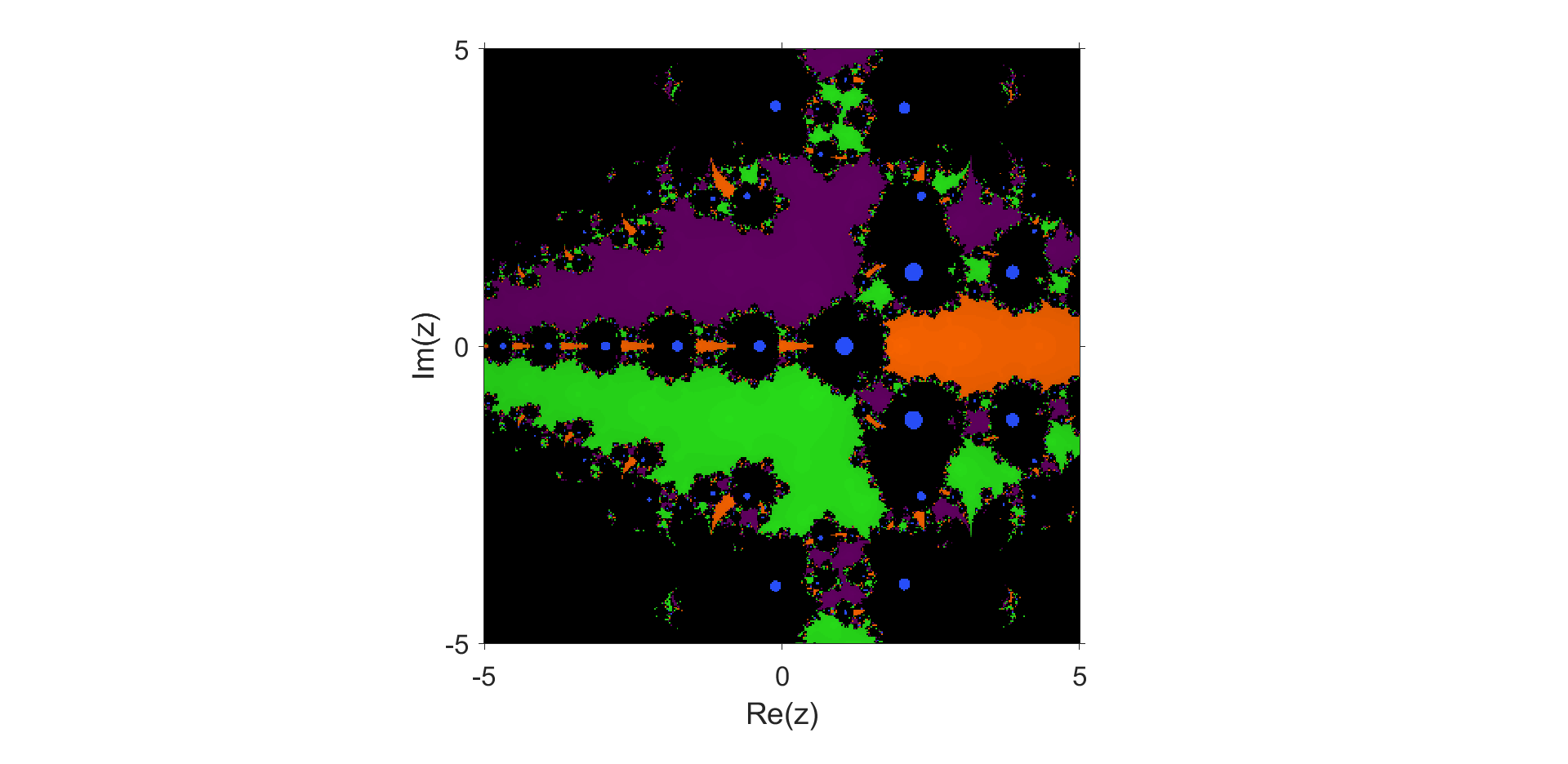}
     \end{subfigure}
     \hfill
     \begin{subfigure}{0.45\textwidth}
     \centering
     \caption{Dynamical plane of $SW_3$ for $p_1$ with $H(t)=1/t$}
     \label{fig:3pasos1/t}
     \includegraphics[width=1.5\textwidth]{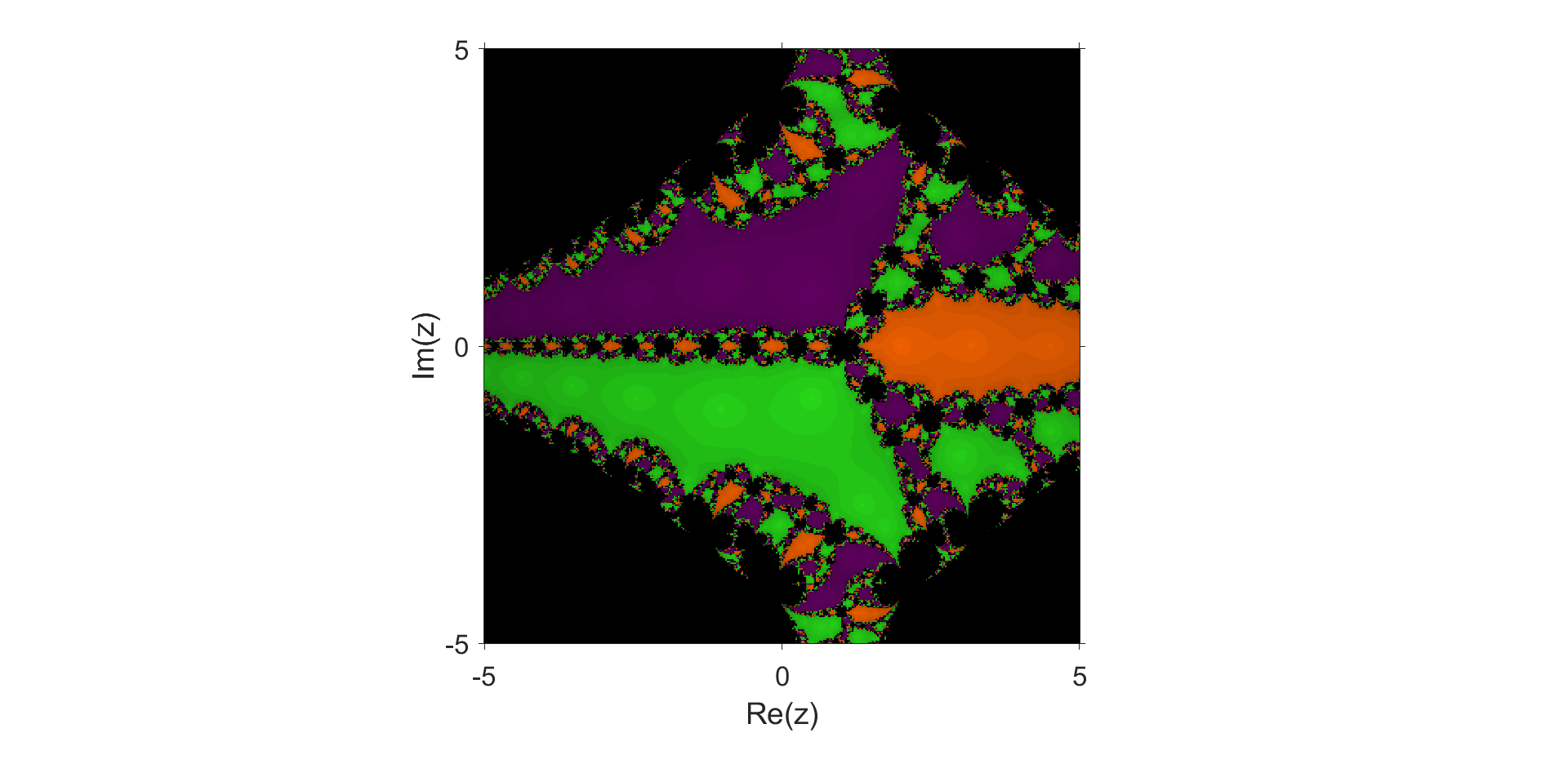}
     \end{subfigure}
 \end{figure}
As we can see in the previous dynamical planes (Figures \ref{fig1}, \ref{fig2}, \ref{fig3}), the convergence zones of all the roots increase when we use the weight function $1/t$ instead of the weight function which is a polynomial. For this reason, for this problem it would be advisable to use the $1/t$ weight function instead of the other, and as we can also see, as we increase the number of steps, the convergence zones decrease, so it would be advisable to use the 1 or 2 step method.

In the case of the methods with memory, we generate the dynamical planes as follows. In this case we set the weight function $H(t)=1/t$.

 To generate the dynamical planes, we denote one of the axis the current iteration and the other axis the previous iteration. That set of points is considered as the initial set of points for the iterative method. Each set of points is painted in a different colour depending on the point to which it converges, where is determined that the inicial point converges to one of the solutions if the distance of the iterations to that solution is less than $10^{-3}$.

These dynamical planes have been generated with a grid of $400\times400$ points and a maximum of $500$ iterations per point to see the difference between the methods better. We paint in orange the initial points that converge to the root $2$, in blue the points that tends to infinity, that have been determined as the points whose absolute value is grater that $10^{150}$, and in black the points that do not converge in less than $500$ iterations.
 \begin{figure}[H]
     \centering
     \caption{Dynamical planes of $SWD_1$ and $SWK_1$ for $p_1$}
     \label{fig4}
     \begin{subfigure}{0.45\textwidth}
     \centering
     \caption{Dynamical plane of $SWD_1$ for $p_1$}
     \label{fig:Stem1}
     \includegraphics[width=1.5\textwidth]{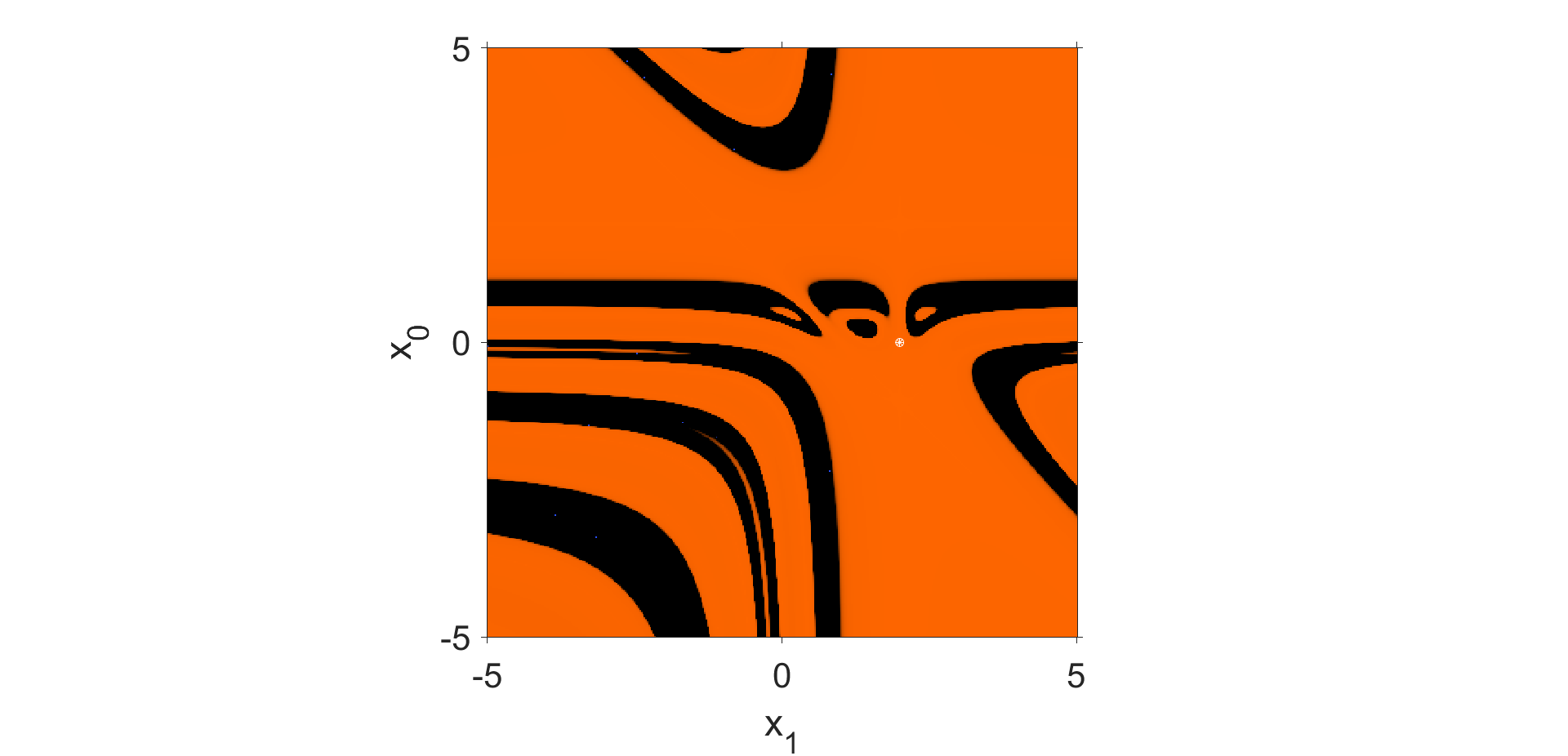}
     \end{subfigure}
     \hfill
     \begin{subfigure}{0.45\textwidth}
     \centering
     \caption{Dynamical plane of $SWK_1$ for $p_1$}
     \label{fig:Stem2}
     \includegraphics[width=1.5\textwidth]{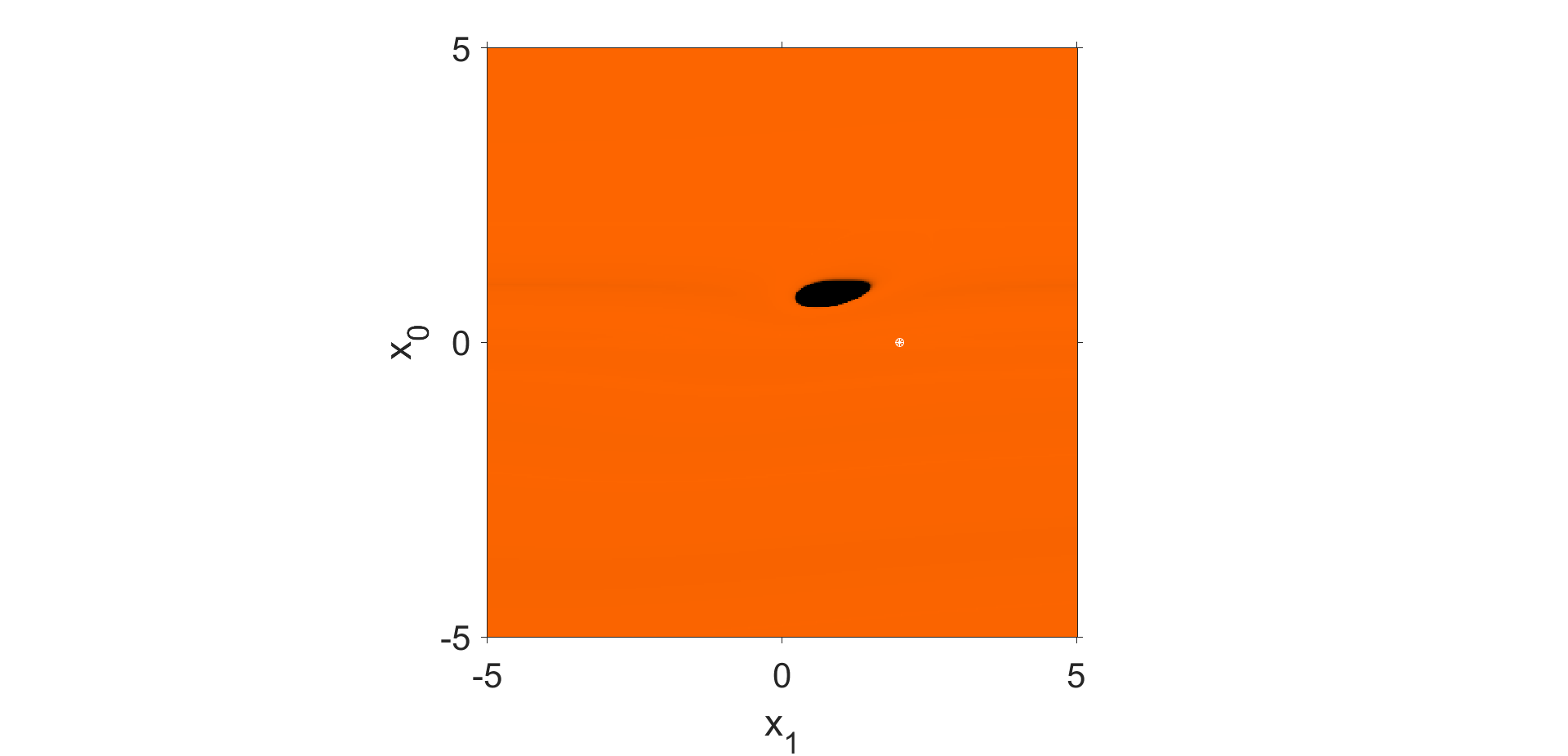}
     \end{subfigure}
 \end{figure}
 \begin{figure}[H]
     \centering
     \caption{Dynamical planes of $SWD_2$ and $SWK_2$ for $p_1$}\label{fig5}
     \begin{subfigure}{0.45\textwidth}
     \centering
     \caption{Dynamical plane of $SWD_2$ for $p_1$}
     \label{fig:2 pasosm1}
     \includegraphics[width=1.5\textwidth]{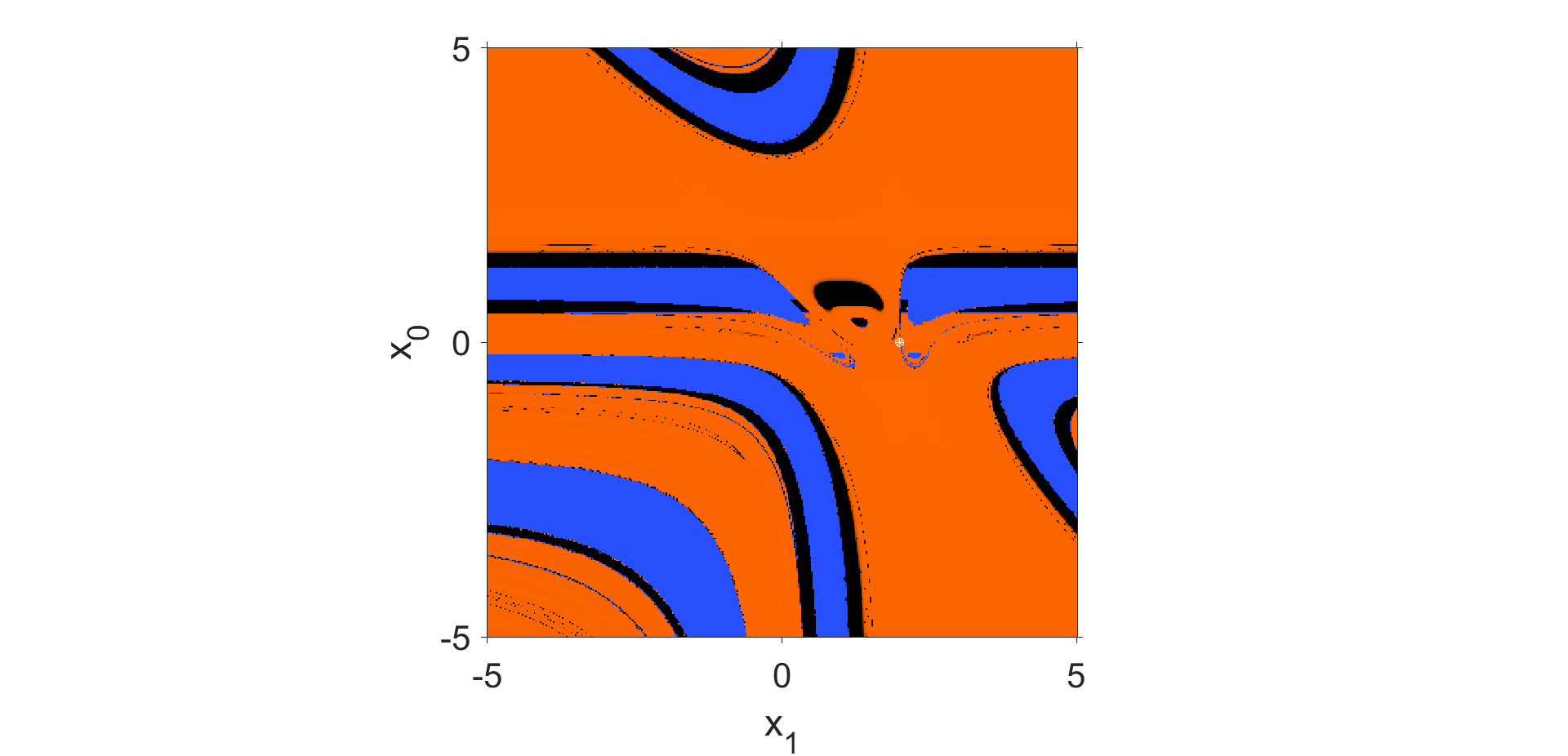}
     \end{subfigure}
     \hfill
     \begin{subfigure}{0.45\textwidth}
     \centering
     \caption{Dynamical plane of $SWK_2$ for $p_1$}
     \label{fig:2 pasosm2}
     \includegraphics[width=1.5\textwidth]{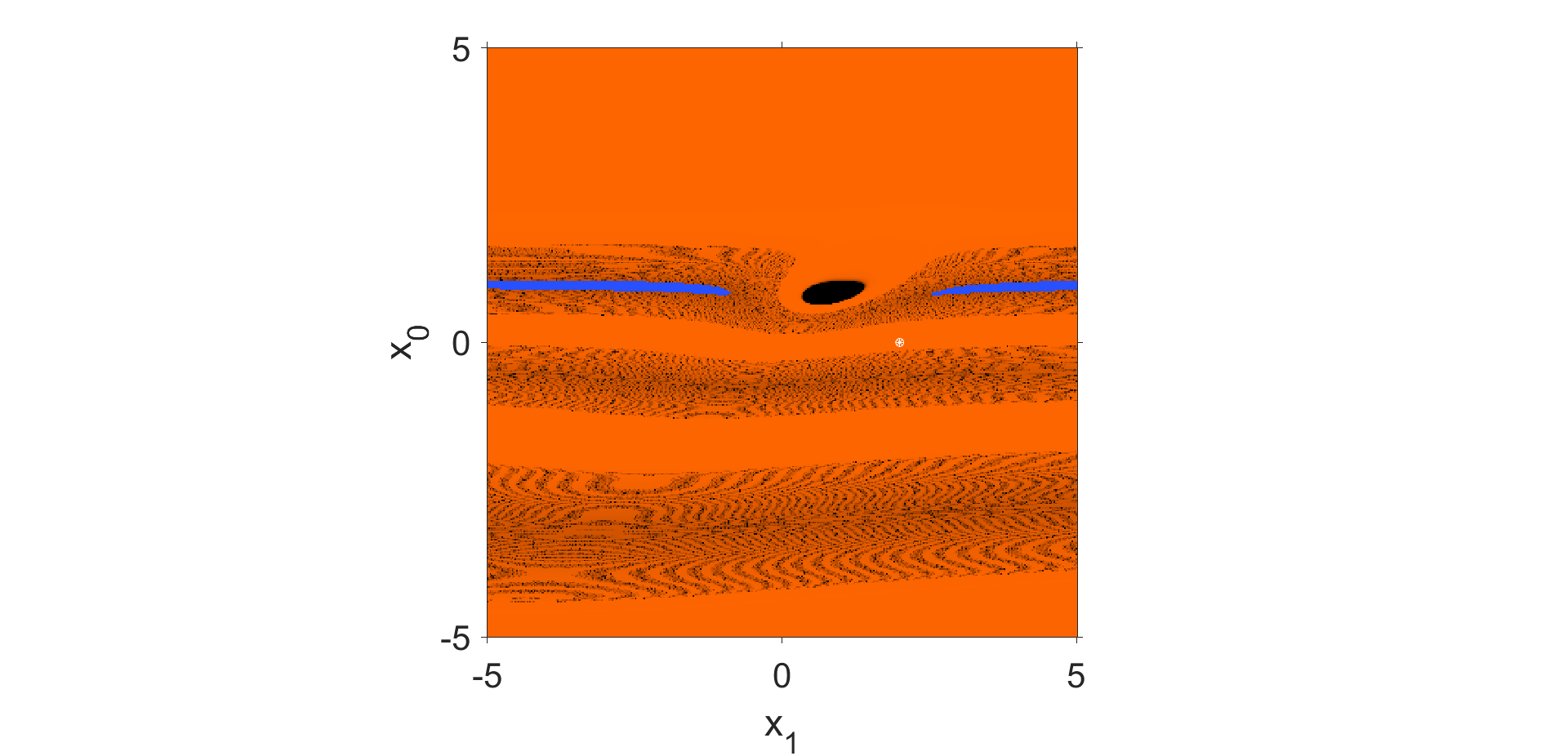}
     \end{subfigure}
 \end{figure}
 \begin{figure}[H]
     \centering
     \caption{Dynamical planes of $SWD_3$ and $SWK_3$ for $p_1$}\label{fig6}
     \begin{subfigure}{0.45\textwidth}
     \centering
     \caption{Dynamical plane of $SWD_3$ for $p_1$}
     \label{fig:3 pasosm1}
     \includegraphics[width=1.5\textwidth]{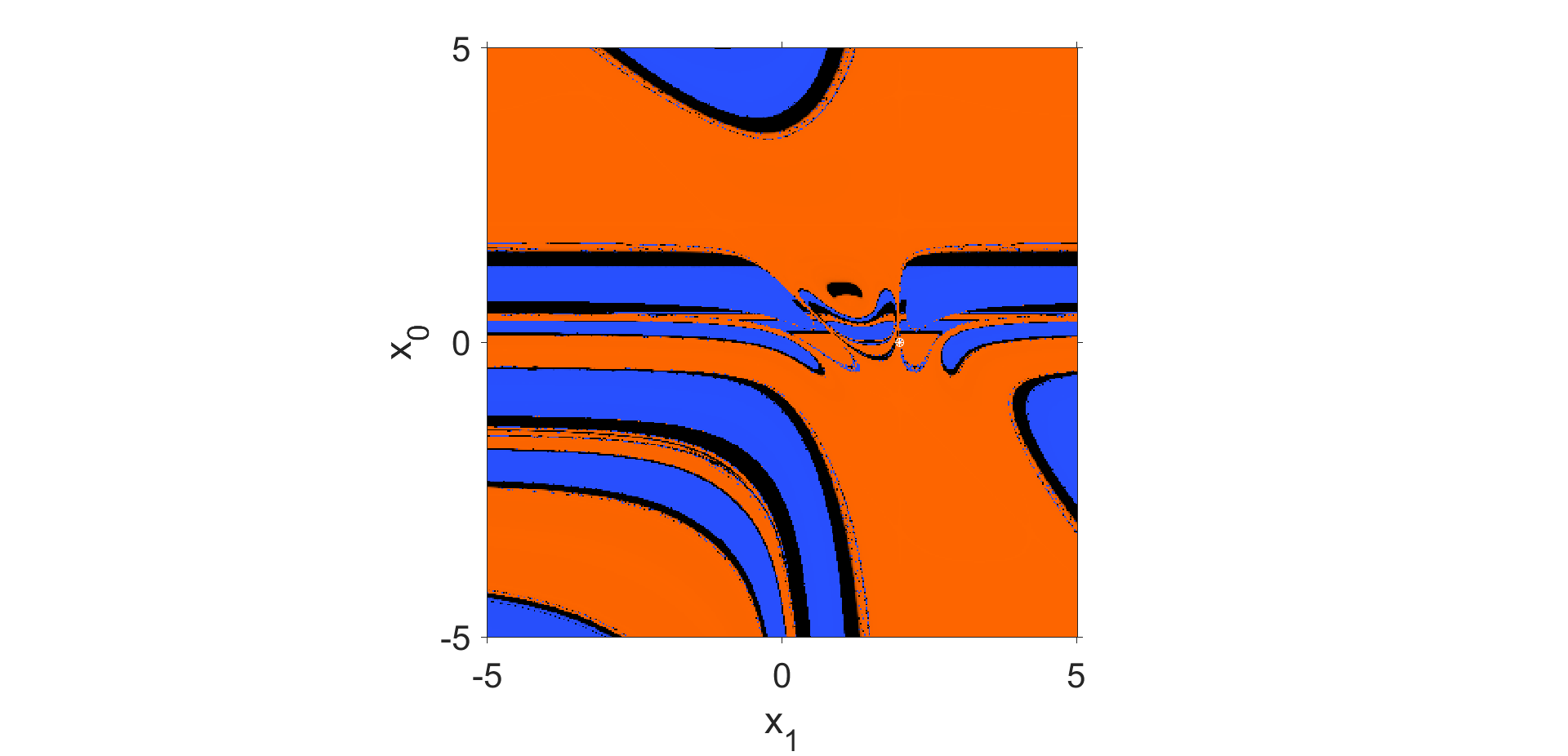}
     \end{subfigure}
     \hfill
     \begin{subfigure}{0.45\textwidth}
     \centering
     \caption{Dynamical plane of $SWK_3$ for $p_1$}
     \label{fig:3 pasosm2}
     \includegraphics[width=1.5\textwidth]{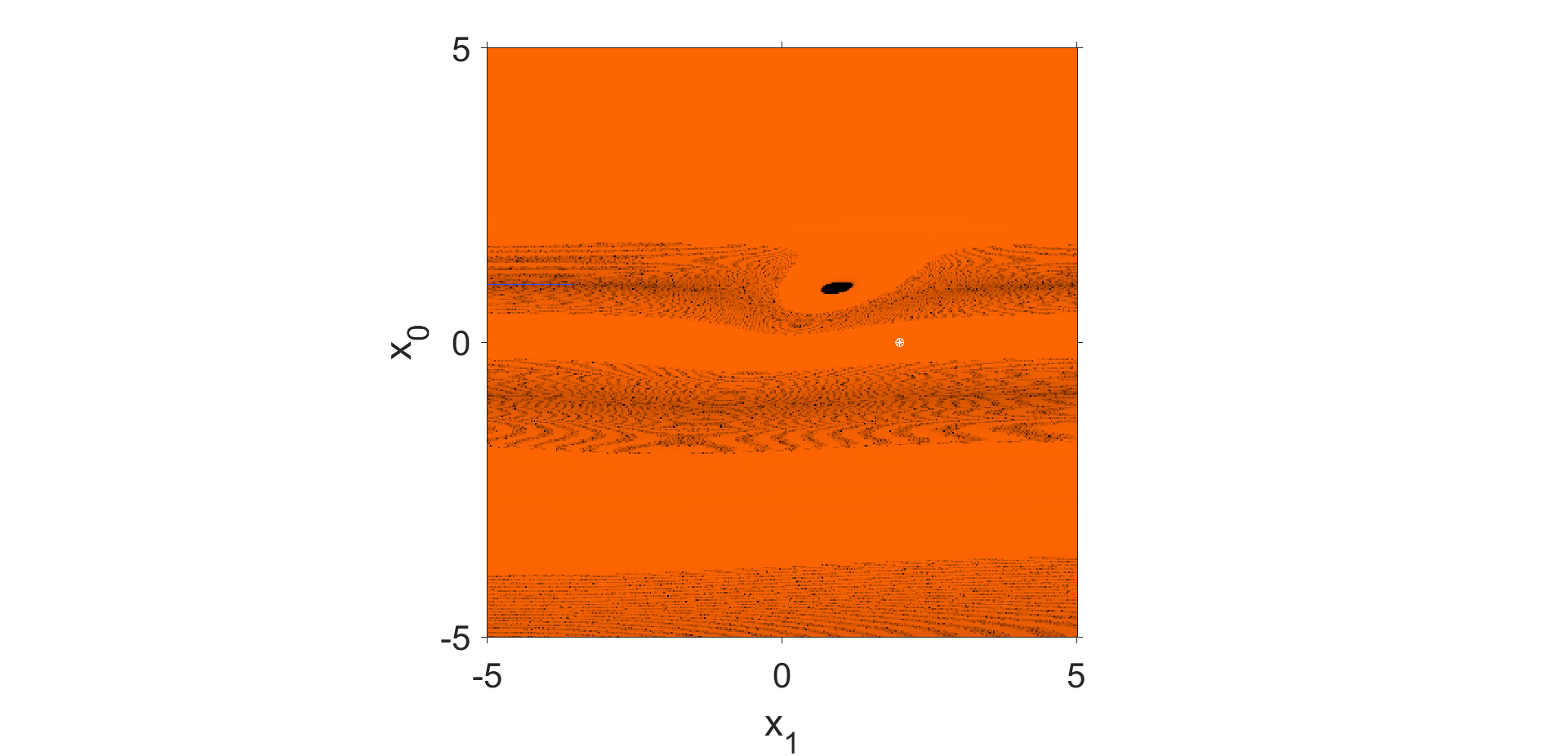}
     \end{subfigure}
 \end{figure}
We note that in the case of the memory methods, as shown in Figures \ref{fig4}, \ref{fig5} and \ref{fig6}, larger convergence areas to the real root are obtained in the case where the Kurchatov divided difference operator is used for any number of steps. For this reason, it would be better to use any of the methods with memory using the Kurchatov's divided difference operator, rather than others.

We discuss the case of methods without memory for solving the non-linear system $p_2$. For all methods the following matrix function has been selected as weight function:
\begin{itemize}
    \item $H(t)=t^{-1}$.
\end{itemize}
To generate the dynamical planes, we have chosen a mesh of $400 \times 400$ points, and what we do is apply our methods to each of these points, taking the point as the initial estimate. Each axis represent each component of the initial point. We have also defined that the maximum number of iterations that each initial estimate must do is $80$, and that we determine that the initial point converges to one of the solutions if the distance to that solution is less than $10^{-3}$. We paint in orange the initial points that converge to the root $(1,1)^T$, in green the initial points that converge to the root $(1,-1)^T$, in blue the initial points that converge to the root $(-1,1)^T$, in red the initial points that converge to the root $(-1,-1)^T$ and in black the initial points that do not converge to any root.
 \begin{figure}[H]
     \centering
     \caption{Dynamical plane of $SW_1$ for $p_2$}
     \label{fig:si1}
     \includegraphics[width=0.8\textwidth]{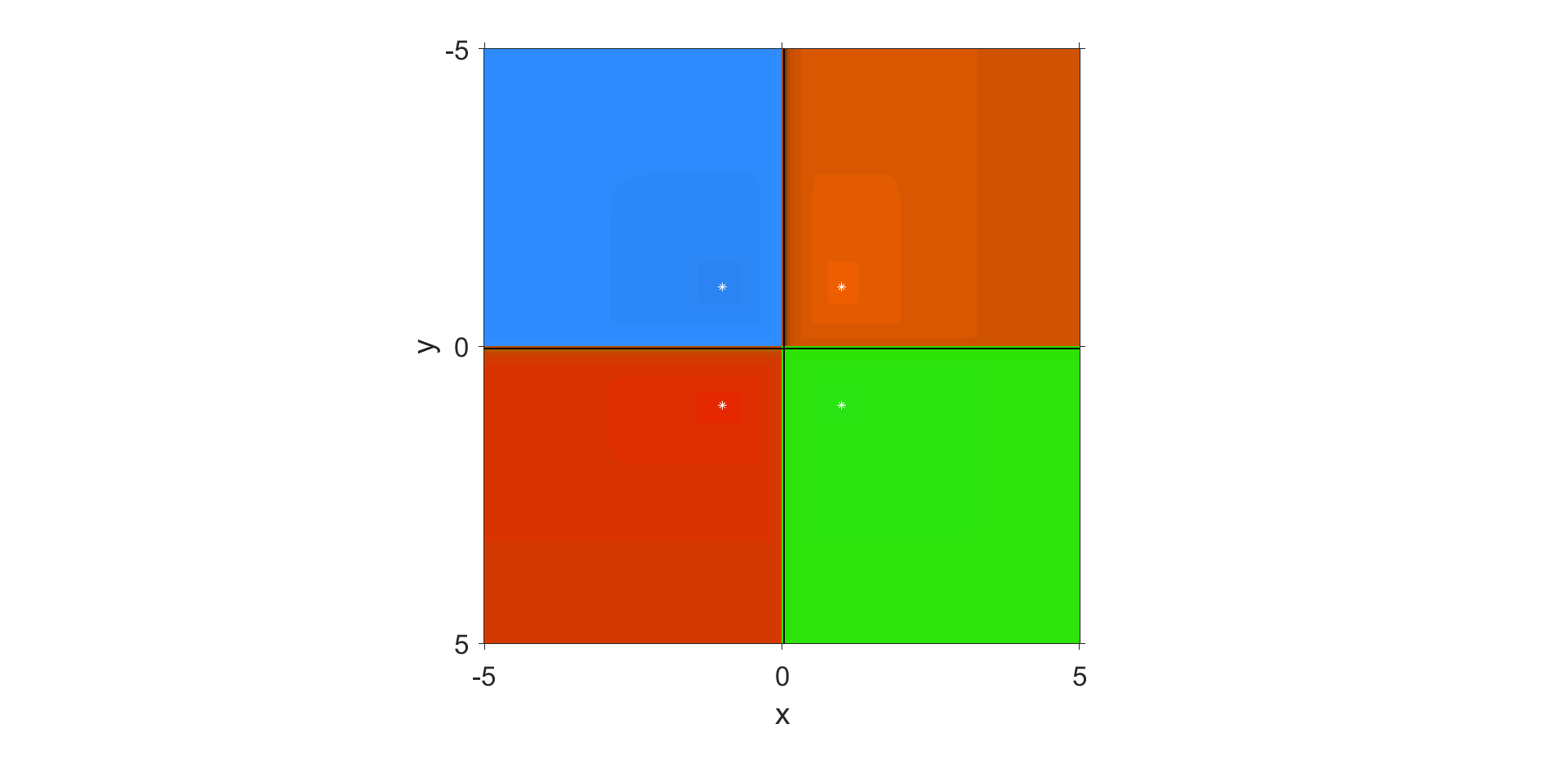}
     \end{figure}
     \begin{figure}[H]
     \centering
     \caption{Dynamical plane of $SW_2$ for $p_2$}
     \label{fig:si2}
     \includegraphics[width=0.8\textwidth]{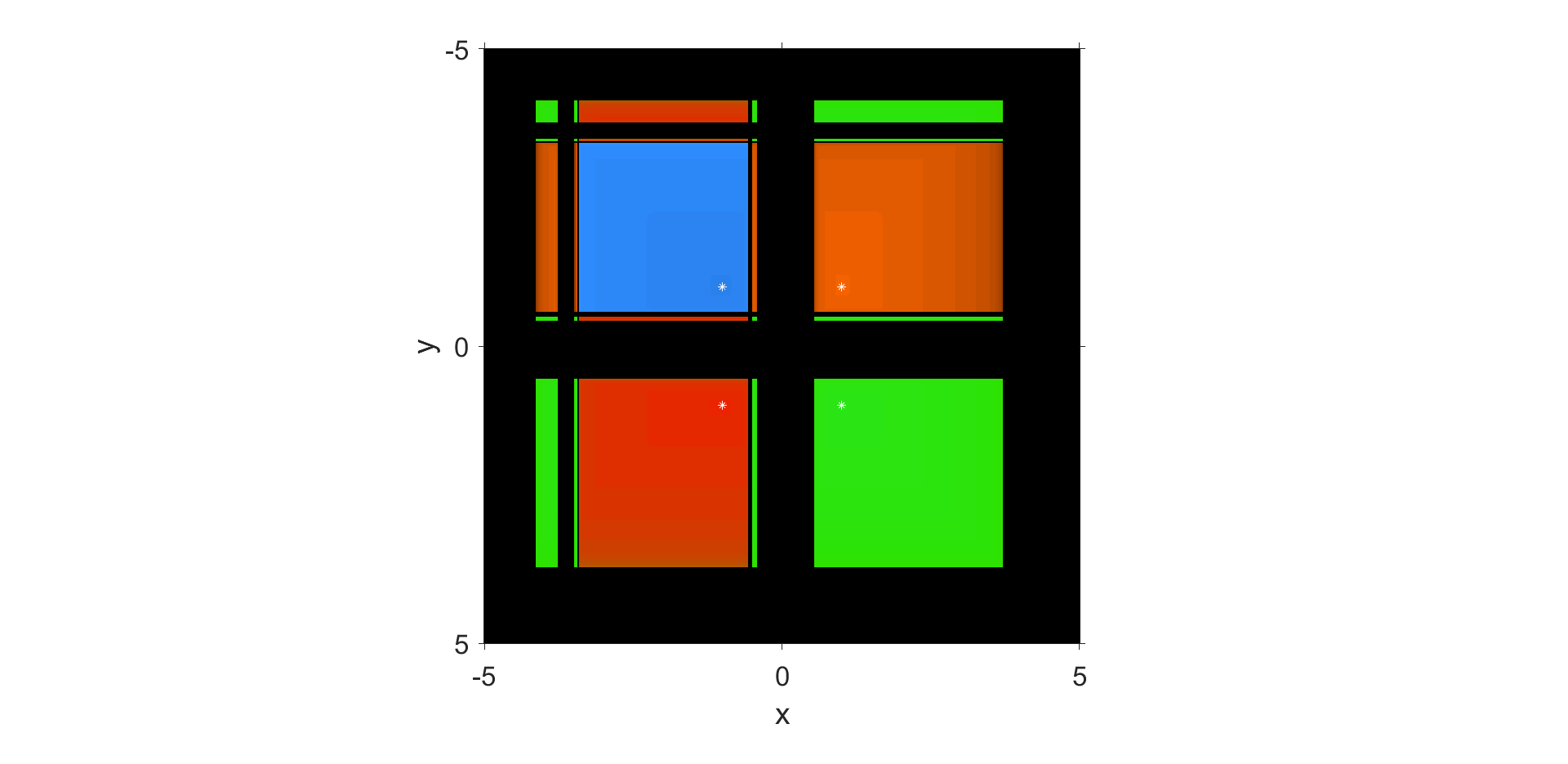}
     \end{figure}
     \begin{figure}[H]
     \centering
     \caption{Dynamical plane of $SW_3$ for $p_2$}
     \label{fig:si3}
     \includegraphics[width=0.8\textwidth]{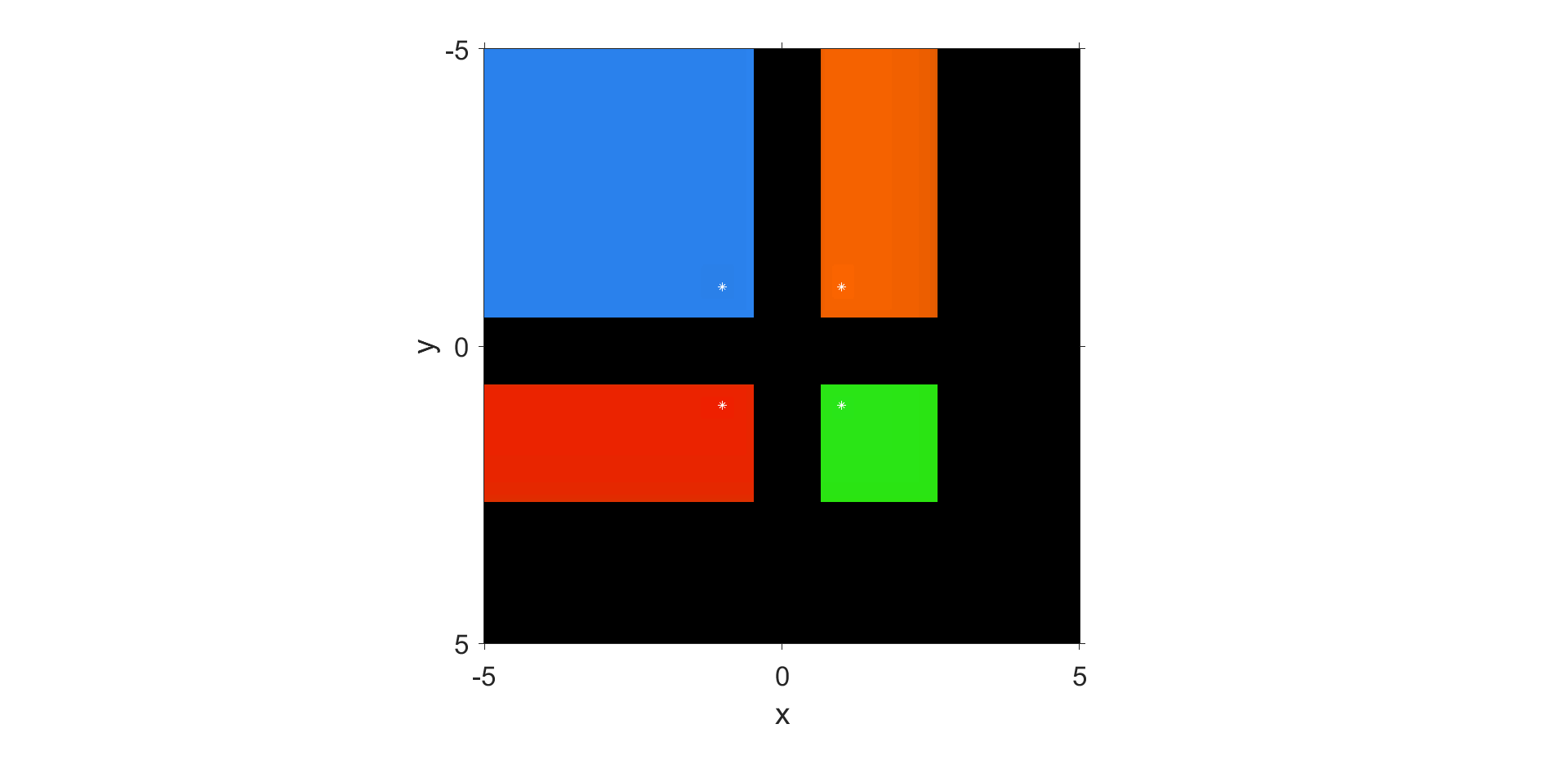}
 \end{figure}
 As we can see in Figures \ref{fig:si1}, \ref{fig:si2} and \ref{fig:si3}, the iterative method that has larger convergence zones to the roots is the $SW_1$ method, so in this case it would be more recommendable to use this method.

\section{Conclusions}
In this manuscript, a class of m-steps derivative-free iterative methods for solving nonlinear systems, has been designed. It has order of convergence $2m$. The expression of the error equation allows us to introduce memory in this family, achieving order $m+\sqrt{m^2+4m+4}$, for $m\geq 2$. We have analyzed the efficiency of the proposed schemes, studying the optimal number of steps providing the best efficiency index.

\section{Author contributions}
All the authors have contributed in the same way in this manuscript.

\section{Financial disclosure}
This research was partially supported by  Universitat Politècnica de
València Contrato Predoctoral PAID-01-20-17 (UPV).

\end{document}